\renewenvironment{thebibliography}[1]{
  \begin{oldthebibliography}{#1}
    \setlength{\itemsep}{0.05em}
    \setlength{\parskip}{0em}
}
{
  \end{oldthebibliography}
}
\definecolor{MidnightBlue}{rgb}{0.0, 0.33, 0.71}
\definecolor{Blue}{rgb}{0.1, 0.1, 0.44}
\definecolor{teal}{rgb}{0.0, 0.5, 0.5}
\definecolor{mycolor1}{rgb}{0.00000,0.44700,0.74100}
\definecolor{mycolor2}{rgb}{0.8500, 0.3250, 0.0980}
\definecolor{mycolor3}{rgb}{0.9290, 0.6940, 0.1250}
\definecolor{mycolor4}{rgb}{0.4940, 0.1840, 0.5560}
\definecolor{mycolor5}{rgb}{0.4660, 0.6740, 0.1880}
\newtheorem{theorem}{Theorem}[section]
\newtheorem{lemma}[theorem]{Lemma}
\newtheorem{proposition}[theorem]{Proposition}
\theoremstyle{definition}
\newtheorem{definition}[theorem]{Definition}
\newtheorem{algorithm}[theorem]{Algorithm}
\newenvironment{example}
{\pushQED{\qed}\examplex}
{\popQED\endexamplex}
\newenvironment{remark}
{\pushQED{\qed}\remarkx}
{\popQED\endremarkx}
\numberwithin{equation}{section}
\newtheoremstyle{citing}
{}
{}
{\itshape}
{}
{\bfseries}
{\textbf{.}}
{.5em}
{\thmnote{#3}}
{\theoremstyle{citing}
}
\DeclareMathOperator{\Hilb}{Hilb}
\DeclareMathOperator{\NN}{\mathbb{N}}
\DeclareMathOperator{\CC}{\mathbb{C}}
\DeclareMathOperator{\RR}{\mathbb{R}}
\DeclareMathOperator{\ZZ}{\mathbb{Z}}
\DeclareMathOperator{\QQ}{\mathbb{Q}}
\DeclareMathOperator{\init}{in}
\DeclareMathOperator{\ind}{ind}
\DeclareMathOperator{\rank}{rank}
\DeclareMathOperator{\GL}{GL}
\DeclareMathOperator{\Sol}{Sol}
\newcommand{\cO}{\mathcal{O}}
\renewcommand{\d}{\mathrm{d}}
\newcommand{\cM}{\mathcal{M}}
\newcommand{\cI}{\mathcal{I}}
\pgfplotsset{compat=1.18}
\title{Border Bases in the Rational Weyl Algebra}
\author{Carlos Rodriguez${}^\flat$ and Anna-Laura Sattelberger${}^\flat$}
\date{}
\begin{document}
\maketitle
\thispagestyle{empty}

\begin{abstract}
Border bases are a generalization of Gröbner bases for {zero-dimensional ideals in} polynomial rings.
In this article, we introduce border bases for a non-commutative ring of linear differential operators, namely the rational Weyl algebra. We elaborate on their properties and present algorithms to compute with them. We apply this theory to represent integrable connections as cyclic $D$-modules explicitly. As an application, we visit differential equations behind a string, a Feynman as well as a cosmological integral. We also address the classification of particular $D$-ideals of a fixed holonomic rank, 
namely the case of linear PDEs with constant coefficients as well as Frobenius ideals. Our approach rests on the theory of Hilbert schemes of points in affine space. 
\end{abstract}

\vspace*{-6mm}
\begin{small}
{\hypersetup{linkcolor=black}
\setcounter{tocdepth}{2}
\tableofcontents
\renewcommand{\baselinestretch}{1.0}\normalsize
}
\end{small}

\vfill

{\small
\noindent ${}^{\flat}$  Max Planck Institute for Mathematics in the Sciences, Inselstra{\ss}e~22, 04103 Leipzig, Germany\\ 
\hspace*{1.4mm} {\tt $\{$carlos.rodriguez and anna-laura.sattelberger$\}$ @ mis.mpg.de}}

\newpage
\addcontentsline{toc}{section}{Introduction}
\section*{Introduction}
Linear partial differential operators with polynomial coefficients are encoded as elements of the Weyl algebra, denoted $D_n=\CC[x_1,\ldots,x_n]\langle \partial_1,\ldots,\partial_n\rangle$. The mathematical field of algebraic analysis investigates systems of linear PDEs by studying left modules over~$D_n$ in algebro-geometric terms.
In this article, we focus on left ideals in the rational Weyl algebra, 
\begin{align*}
    R_n \,=\, \CC(x_1,\ldots,x_n) \langle \partial_1,\ldots,\partial_n \rangle ,
\end{align*}
which encodes linear PDEs with coefficients in the field of rational functions in the variables \mbox{$x=(x_1,\ldots,x_n)$}.
For computations with these ideals, we resort to the theory of border bases. Border bases are a generalization of Gröbner bases for {ideals of points in} polynomial rings and are numerically better behaved, see~\cite{Stetter}. In more geometric terms, border bases naturally arise in the classification of zero-dimensional ideals $I$ in polynomial rings. There, border bases can be used to represent open subschemes that cover the Hilbert scheme $\Hilb_n^m$ of $m$ points in affine $n$-space, see~\cite{combalg}, and they are characterized in terms of the commutativity of the companion matrices which encode the endomorphisms on $\CC[X_1,\ldots,X_n]/I$ given by the multiplication by the variables~$X_i$, see~\cite{KKR05}. 

In this article, we introduce border bases for left ideals $J\subset R_n$ {of finite holonomic rank}, discuss their properties, and present algorithms to compute with them. Motivated by the characterization of border bases in terms of the companion matrices, we characterize border bases in the rational Weyl algebra in terms of the connection matrices of the ideal, which encode  the multiplication by $\partial_i$ on $R_n/J$. We show in \Cref{prop:borderbasescomm_Rn} that border bases in the rational Weyl algebra---in contrast to the commutative case---are characterized by imposing integrability conditions. 

As an application of the theory, we show how to represent an integrable connection as a cyclic $D$-module by explicitly constructing generators of a $D$-ideal $I$ such that $D/I$ gives rise to the connection matrices one started with. {That this is possible at all, is explained by the fact that integrable connections are holonomic $D$-modules and these, in turn, are cyclic, see~\cite{HTT08,Stafford78}.}
Each $D_n$-ideal $I$ of holonomic rank~$m$, when expressed as a Pfaffian system, gives rise to an $n$-tuple $(A_1,\ldots,A_n)$ of $m\times m$ matrices. 
To be precise, we rewrite $I$ as a first-order matrix system
\begin{align*}
\partial_i \bullet F \,=\, A_i \cdot F, \quad i=1,\ldots,n,
\end{align*}
where $F$ is a vector of functions---for instance a vector of master integrals in the study of Feynman integrals---and the $A_i$'s are $m\times m$ matrices with entries in $\CC(x_1,\ldots,x_n)$.
We refer to these matrices as ``connection matrices.''  By construction, the connection matrices fulfill the integrability conditions, i.e.,
\begin{align*}
    [A_i,A_j] \,=\, \partial_i\bullet A_j-\partial_j\bullet A_i \quad \text{ for all }i \neq j ,
\end{align*}
where entry-wise differentiation of the matrices is meant.
One can translate the problem to $m\times m$ matrices $A$ of rational differential one-forms by passing to the connection form $\d -A\wedge \ $ {while remembering the chosen basis}, where $\d$ denotes the total differential and $A=A_1\d x_1+\cdots + A_n\d x_n$. {We demonstrate this explicitly in \Cref{ex:connPfaff}.}
One sometimes is also interested in the space~$\{A \, | \, \d A=0\}$ of {\em closed} connection matrices (in the sense of closed differential forms), such as for instance connection matrices with logarithmic differentials as entries. If the connection matrix $A$ is closed, then the $A_i$'s are pairwise commuting. One hence finds oneself within varieties of commuting matrices. 
Closed connection matrices also occur in the setup of dimensional regularization of Feynman integrals, in which an additional small parameter~$\varepsilon$ is present: the so-called ``$\varepsilon$-factorized'' form of~\cite{henn2013multiloop} of the connection matrices implies that they are closed.

One can pass from a holonomic $D_n$-ideal{---also called ``maximally over-determined systems'' in the algebraic analysis literature}---to an integrable connection, for instance by using the package {\tt ConnectionMatrices}~\cite{ConnMatM2} in the computer algebra software {\em Macaulay2}~\cite{M2}. The package builds on Gröbner basis computations in the rational Weyl algebra, see~\cite{SST00}. In the present article, we tackle the reverse direction: we utilize the theory of border bases to associate an $R_n$-ideal to an integrable connection, expressing the connection as a cyclic module explicitly. Deriving $D$-ideals from a matrix system of PDEs is also of current interest in the study of Feynman integrals, see e.g.~\cite{Didealbanana,diffspaceFeynman}. $D$-modules and Feynman integrals have had a very fruitful overlap. For example, the theory of holonomic $D$-modules was used to prove a conjecture in physics about the finiteness of master integrals---in mathematical terms, that the holonomic rank of the annihilating $D$-ideal of a Feynman integral is finite, see \cite{KK77,SmirnovNumberFeynman}.

As a further application, we visit a cosmological correlation function, and show that in this example, border bases are well-suited to make certain features like symmetry manifest---in a more immediate way than Gröbner bases would.
We also address the classification of certain $D$-ideals of a fixed holonomic rank~$m$ by resorting to the theory of Hilbert schemes of points in affine space from the commutative setup. The ideals which fit that setup are linear PDEs with constant coefficients as well as Frobenius ideals, i.e., $D_n$-ideals that can be generated by polynomials in the Euler operators $\theta_i=x_i\partial_i$, $i=1,\ldots,n$.

\bigskip
\noindent{\bf Notation and conventions.} The letter $\NN=\ZZ_{\geq 0}$ denotes the non-negative integers. The ($n$-th) Weyl algebra is denoted $D_n=\CC[x_1,\ldots,x_n]\langle \partial_1,\ldots,\partial_n\rangle$, or just $D$, if the number of variables is clear from the context. We denote the rational Weyl algebra by $R_n=\CC(x_1,\ldots,x_n)\langle \partial_1,\ldots,\partial_n\rangle$. We denote the action of a differential operator on a function by a $\bullet$, e.g., $\partial_i\bullet f=\partial f/\partial x_i$.
When speaking about $D$-ideals and $D$-modules, we always mean left $D$-ideals and left $D$-modules, and likewise for the rational Weyl algebra. If the number of variables is clear from the context, we denote by $\CC(x)$ the field of rational functions $\CC(x_1,\ldots,x_n)$ in the variables $x=(x_1,\ldots,x_n)$. We typically denote $D_n$-ideals by the letter~$I$ and $R_n$-ideals by~$J$. {We here take coefficients in $\CC$; in applications, one typically encounters $\QQ$ or $\RR$ instead. For polynomial rings $S=\CC[X_1,\ldots,X_n]$, we use $X_i$ as variable names for a better distinction from the non-commutative case. For the sake of better readability, we avoid the $[\cdot]$-notation for equivalence classes; e.g., $\partial_i\in R_n/J$ stands for~$[\partial_i]=\partial_i+J\in R_n/J$.}

\bigskip
\noindent{\bf Outline.} 
\Cref{sec:background} recalls some background. This includes the topics of $D$-ideals and integrable connections, Hilbert schemes of points  as well as border bases in commutative polynomial rings. In \Cref{sec:borderWeyl}, we introduce border bases for {ideals of finite holonomic rank in} the rational Weyl algebra. We discuss theoretical properties of border bases and present algorithms to compute with them. As an application, we explain how to construct a $D$-ideal that represents an integrable connection as a cyclic $D$-module explicitly and also address moduli problems of certain \mbox{$D$-ideals} of a fixed holonomic rank by resorting to the theory of Hilbert schemes of points. \Cref{sec:explappl} presents applications in physics. We visit differential equations behind a string integral in genus zero, generic-mass sunrise  integrals in dimensional regularization, and the correlation function of the cosmological two-site graph. We also comment on $\varepsilon$-factorized connection matrices.

\section{Background}\label{sec:background}
\subsection{Writing systems of linear PDEs in matrix form}
We first recall some background from the theory of $D$-modules and integrable connections, see for instance \cite{HTT08,SST00}.
We denote by $D_n=\CC[x_1,\ldots,x_n]\langle \partial_1,\ldots,\partial_n\rangle$ the ($n$-th) Weyl algebra. It is obtained from the free $\CC$-algebra generated by the variables $x_1,\ldots,x_n$ and partial differential operators $\partial_1,\ldots,\partial_n$, by imposing the following relations: all generators are assumed to commute, except $x_i$ and $\partial_i$. They fulfill the Leibniz rule, i.e., their commutator obeys $[\partial_i,x_i]=1$, $i=1,\ldots,n$.
Similarly, we denote by $R_n=\CC(x_1,\ldots,x_n)\langle \partial_1,\ldots,\partial_n\rangle$ the ($n$-th) rational Weyl algebra, with corresponding commutator relations. The {\em holonomic rank} of a $D_n$-ideal~$I$, denoted $\rank(I)$, is the dimension of the underlying $\CC(x)$-vector space of the $R_n$-module $R_n/R_nI$, i.e.,
\begin{align}
\rank(I) \, \coloneqq \, \dim_{\CC(x)}(R_n/R_nI),
\end{align} 
where $\CC(x)$ abbreviates $\CC(x_1,\ldots,x_n)$. In the same way, for ideals $J \subset R_n$, we refer to the $\CC(x)$-dimension of $R_n/J$ as the holonomic rank of the $R_n$-ideal $J$. If $I$ is holonomic, then on any simply connected domain in $\CC^n$ outside the singular locus of~$I$, the holonomic rank equals the dimension of the $\CC$-vector space of holomorphic solutions to the system of PDEs encoded by~$I$. This follows from the theorem of Cauchy--Kovalevskaya--Kashiwara.

Let $I$ be a $D$-ideal of holonomic rank $m$, and let $(s_1=1,s_2,\ldots,s_m)$ be a $\CC(x)$-basis of~$R_n/R_nI$. The $s_i$'s can be chosen to be monomials in the $\partial_i$'s, and w.l.o.g.\ we can assume $s_1=1$. For a solution  $f\in \Sol(I)$ to~$I$, denote $F=(f,s_2\bullet f,\ldots,s_m\bullet f)^\top$. Then there are unique $m\times m$ matrices $A_1,\ldots,A_n$ with entries in $\CC(x)$ that fulfill
\begin{align}\label{eq:connmatrices}
    \partial_i \bullet F \,=\, A_i\cdot F, \qquad i=1,\ldots,n
\end{align}
for \underline{any} $f\in \Sol(I)$. This system is called {\em Pfaffian system} of~$I$ (with respect to the chosen basis), see~\cite[p.~38]{SST00}. We refer to the matrices in \eqref{eq:connmatrices} as the {\em connection matrices} of~$I$.  The entries of the connection matrices can be obtained by a Gröbner basis reduction of the $\partial_is_j$ modulo~$I$. The left-hand side of \eqref{eq:connmatrices} is the vector $(\partial_i \bullet f,(\partial_i\cdot s_2)\bullet f,\ldots,(\partial_i\cdot s_m)\bullet f)^\top$, where $\cdot$ is the multiplication of differential operators.
The connection matrices of a $D_n$-ideal $I$ only depend on the $R_n$-ideal $R_nI$. {In what follows, we therefore work with ideals $J=R_nI\subset R_n$ in the rational Weyl algebra mostly.} In particular, the  connection matrices could equally be read from the Weyl closure~\cite{WeylClosureTsai} of~$I$, namely the $D_n$-ideal $W(I)=R_nI\cap D_n$, since $R_nI=R_nW(I)$ as $R_n$-ideals.

\begin{remark}
In the study of scattering amplitudes and cosmology, it is common to obtain systems of linear PDEs of the form in~\eqref{eq:connmatrices} for a vector of holonomic functions $F=(F_1,F_2,\ldots,F_m)^\top$. Examples of these functions include string amplitudes, cosmological correlators, and Feynman loop integrals. Physicists refer to the integrals $F_1,\ldots,F_m$ as {\em master integrals}. 
We can relate this notion to $\CC(x)$-bases of a cyclic $R_n$-module (i.e., a module of the form~$R_n/R_nI$) that consist of monomials in the $\partial_i$'s. The $R_n$-ideal $J=R_nI$ is derived from a $\CC(x)$-combination of the integrals $F_1,\ldots,F_m$, which we demonstrate explicitly at the example of a sunrise Feynman integral in \Cref{sec:sunrise}, see~\Cref{ex:gaugesunrise}. This is one of the examples we present in which one of the master integrals suffices to derive the $R_n$-ideal~$J$. 
\end{remark}

By construction, the connection matrices fulfill the integrability conditions, i.e., 
\begin{align}
\label{eq:integrabilityConditions}
  [A_i,A_j] \,=\,  \partial_i \bullet A_j -\partial_j\bullet A_i \, ,
\end{align}
for all $i,j=1\ldots,n$,
where entry-wise differentiation is meant.
If $\widetilde{F}=g\cdot F$ for some invertible matrix $g\in \operatorname{GL}_m(\CC(x))$, the transformed system then reads as $\partial_i \bullet \widetilde{F}=\widetilde{A}_i\cdot\widetilde{F}$ for
\begin{align}\label{eq:gauge_trans}
    \widetilde{A}_i \, =\, gA_ig^{-1}+\frac{\partial g}{\partial x_i}g^{-1} \, .
\end{align}
The matrix $\widetilde{A}_i$~\eqref{eq:gauge_trans} is the {\em gauge transform} of $A_i$ with respect to the gauge matrix~$g$. Compared to a similarity transform for changes of basis, an additional term is required, namely the second summand on the right hand side of~\eqref{eq:gauge_trans}.

We now turn to a more geometric perspective. Let $X$ be a smooth algebraic variety. Endowing an $\mathcal{O}_X$-module $\mathcal{M}$ with the structure of a $\mathcal{D}_X$-module is equivalent to giving a $\CC_X$-linear map $\nabla$ that fulfills the Leibniz rule, called a {\em connection} on $\cM$,
\begin{align*}
    \nabla \colon \, \cM \longrightarrow \cM \otimes_{\mathcal{O}_X} \Omega_X^1
\end{align*}
that is flat (also called {\em integrable}), i.e., $\nabla^2=0$. For $X=\mathbb{A}_{\CC}^n$ the affine $n$-space, modules over $D_n$ correspond to sheaves of $\mathcal{D}_{X}$-modules that are quasi-coherent over~$\mathcal{O}_{X}$ (see \cite[Proposition 1.4.4]{HTT08}).  For $\cM$ locally free, choosing a (local) identification $\cM\cong \mathcal{O}_X^m$, one can write $\nabla=\d -A\wedge \,$ with $A$ an $m\times m$ matrix of differential one-forms. The integrability conditions translate as $\d A - A\wedge A=0$. Rewriting $\nabla\colon \Theta_X \to \mathcal{E}\!\operatorname{nd}_{\mathbb{C}_X}(\cM)$ ($\mathcal{O}_X$-linear) by the tensor-hom adjunction and passing to the stalk at the generic point,~$(0)$, to arrive at the field of rational functions, the $\nabla(\partial_i)$ recover the matrices $A_i$ from~\eqref{eq:connmatrices}. The other way round, $A=A_1\d x_1+\cdots+A_n\d x_n$. To be precise, there is a dualization swept under the rug: the connection matrices as defined here actually describe the $D$-module structure on the $D$-module that is dual to~$D/I$, also see~\cite[Section~2.1]{ConnMatM2} for a brief discussion.

\smallskip
In the following example, we demonstrate how to pass from an integrable connection to a Pfaffian system, while carrying along a chosen basis.
\begin{example}[$n=2,m=3$]\label{ex:connPfaff}
 Let $V=\CC(x_1,x_2)^3 $ 
 together with the trivial connection, i.e., $\nabla = \d -A\wedge \,$ with $A$ being the $3\times 3$ zero matrix in the basis of standard unit vectors $(e_1,e_2,e_3)$. One has $(V,\nabla)\cong (R_2/\langle \partial_1,\partial_2\rangle)^3$ and the flat sections of $(V,\d)$ are (component-wise) constant bivariate functions to $\CC^3$. This determines a holonomic (and hence, in particular, cyclic) \mbox{$R_2$-module} of rank~$3$. We now construct an associated Pfaffian system. The vector $v=(x_1^2,x_1,1)$ can be used as a cyclic vector in the sense that the left $R_2$-linear morphism determined by
\begin{align}\label{eq:iso}
   \varphi \colon \,  R_2/J \stackrel{\cong}{\longrightarrow}  \CC(x_1,x_2)^3, \quad 1\mapsto v
\end{align}
is an isomorphism of $R_2$-modules,
where $J=\langle \partial_1^3,\partial_2\rangle $ is a holonomic ideal of rank $3$, namely the kernel of $R_2\rightarrow \CC(x_1,x_2)^3, \ 1 \mapsto v$. 
In \eqref{eq:iso}, the $R_2$-module structure on $R_2/J$ is the one induced by $R_2$, and the $R_2$-module structure on $\CC(x_1,x_2)^2$ is given by the trivial connection, i.e., usual differentiation of rational functions (component-wise).
To determine a $\CC(x_1,x_2)$-basis $(s_1,s_2,s_3)$ of $R_2/J$ in which we write the Pfaffian system of $J$, we now determine preimages of the standard unit vectors under~$\varphi$. 
Since $\frac{1}{2}\partial_1^2\bullet v = e_1$, we set $s_1=\partial_1^2$. We set \begin{align*}
s_2\coloneqq \partial_1\cdot \big(1-\frac{x_1^2}{2}\partial_1^2\big) \quad \text{and} \quad  s_3\coloneqq 1-\frac{x_1^2}{2}\partial_1^2-x_1\cdot s_2,
\end{align*}
resulting in $s_2\bullet v = e_2$ and $s_3\bullet v=e_3$.
In the $\CC(x_1,x_2)$-basis $(s_1,s_2,s_3)$ of $R_2/J$, the Pfaffian system $(A_1,A_2)$ indeed consists of the zero matrices only, since
\begin{align}\label{ex:trconn}
\partial_i \bullet 
\begin{pmatrix}
s_1\bullet f\\ s_2 \bullet f\\ s_3\bullet f
\end{pmatrix} 
\,=\, 
    \partial_i \bullet  \begin{pmatrix}
        \partial_1^2\bullet f \\
        -\frac{x_1^2}{2}  \partial_1^3\bullet f -x_1  \partial_1^2\bullet f+  \partial_1\bullet f \\
        \frac{x_1^3}{2}\partial_1^3\bullet f+ \frac{x_1^2}{2}\partial_1^2\bullet f-x_1\partial_1\bullet f +f
    \end{pmatrix}
\,=\, 
\begin{pmatrix} 0 \\ 0 \\ 0 \end{pmatrix}, \quad i\,=\, 1,2,
\end{align}
for any $f\in \Sol(J)=\CC \cdot \{1,x_1,x_1^2\}$.
Also any set of standard monomials of the $R_2$-ideal could serve as $\CC(x_1,x_2)$-basis. The matrices in the Pfaffian system~\eqref{ex:trconn} would, however, not be the zero matrices anymore---they get transformed according to the gauge transform.
\end{example}

In practice, starting from a $D_n$-ideal $I$ of finite holonomic rank, its connection matrices can be systematically computed with the help of Gröbner bases in the rational Weyl algebra. This is implemented in the package {\tt ConnectionMatrices}~\cite{ConnMatM2} in the open-source computer algebra software {\em Macaulay2}~\cite{M2}. An alternative, fast implementation of Pfaffian systems, building on so-called ``Macaulay matrices,'' is provided in~\cite{MacaulayFeynman}. 

\begin{example}[{\cite[Section 3.3]{Britto:2021prf}}]\label{ex:stringy}
For $m=3$, a family of string integrals is given by the~vector
\begin{align}
\vec{F}(x_1,x_2) \,= \int_0^{x_1} x_0^{s_{12}}(1-x_0)^{s_{25}}\left[\prod_{j=1}^{2}(x_{j}-x_0)^{s_{2j}} \right] \left(\begin{smallmatrix}
  \frac{s_{12}}{x_0} \\
  \frac{s_{12}}{x_0}+\frac{s_{23}}{x_0-x_1} \\
  \frac{s_{12}}{x_0}+\frac{s_{23}}{x_0-x_1}+\frac{s_{24}}{x_0-x_2} 
\end{smallmatrix}\right)  \d x_0 \,  \eqqcolon \begin{pmatrix}
    F_1\\F_2\\F_3
\end{pmatrix},
\end{align}
where the $s_{ij}\in\mathbb{C}\,\backslash\mathbb{Z}$ are fixed complex numbers. These integrals are closely related to scattering amplitudes of open strings.  The connection matrices for this  vector of string integrals in the variables $(x_1,x_2)$ are:
\begin{align}\begin{split}\label{eq:A3A4stringy}
    A_1 &\,=\, \left(\!\begin{array}{ccc}
    \frac{s_{12}+s_{23}}{x_1}&-\frac{s_{12}}{x_1}&0\\
    -\frac{s_{24}}{x_1-x_2}-\frac{s_{25}}{x_1-1}&
    \frac{s_{24}+s_{23}}{x_1-x_2}+\frac{s_{25}}{x_1-1}&
    -\frac{s_{23}}{x_1-x_2}+\frac{s_{23}}{x_1-1}\\
    -\frac{s_{25}}{x_1-1}&\frac{s_{25}}{x_1-1}&\frac{s_{23}}{x_1-1}
    \end{array}\!\right),
    \\
    A_2 &\,=\, \left(\!\begin{array}{ccc}
    \frac{s_{24}}{x_2}&\frac{s_{12}}{x_2}&-\frac{s_{12}}{x_2}\\
    \frac{s_{24}}{x_2}-\frac{s_{24}}{x_2-x_1}&\frac{s_{12}}{x_2}+\frac{s_{24}+s_{23}}{x_2-x_1}&-\frac{s_{12}}{x_2}-\frac{s_{23}}{x_2-x_1}\\
    0&-\frac{s_{25}}{x_2-1}&\frac{s_{24}+s_{25}}{x_2-1}
    \end{array}\!\right) .
 \end{split} 
 \end{align}
This pair of matrices commutes and we can write
\begin{align*}
    \d P(x_1,x_2) = A_1 \, \d x_1 + A_2 \, \d x_2, 
\end{align*}
with the entries of $P(x_1,x_2)$ being logarithms of rational functions. 
%
\end{example}

\subsection{Hilbert schemes of points}\label{sec:Hilb}
The classification of zero-dimensional ideals in polynomial rings is a classical object of study and gets addressed by the theory of Hilbert schemes of points. 

Let $S=\CC[X_1,\ldots,X_n]$.
The {\em Hilbert scheme of $m$ points in affine $n$-space},
\begin{align}\label{eq:Hilbm}
    \Hilb_n^m \,=\, \left\{ I\subset S \,|\, \dim_{\CC} (S/I) =m \right\} ,
\end{align}
classifies ideals $I\subset S$ whose quotient ring is $m$-dimensional as a $\CC$-vector space. It is known to be connected, which was proven by Hartshorne~\cite{Har66} with the help of {distractions} of ideals. A combinatorial construction of affine subschemes of the Hilbert scheme which cover $\Hilb_n^m$ is provided in~\cite[Chapter 18]{combalg}. 
As we will argue in \Cref{sec:classFrobconst}, this theory can be used for the classification of Frobenius ideals, which are encoded by ideals in the commutative subring $\CC[\theta_1,\ldots,\theta_n]$ of~$D_n$, as well as the case of linear PDEs with constant coefficients, which are encoded by ideals in the polynomial ring $\CC[\partial_1,\ldots,\partial_n]$.
Before explaining the general theory, we present some examples. 

\begin{example} \label{ex:MxMyCommute}
Let \mbox{$S=\CC[X,Y]$}.
In this case, $\Hilb_2^3$ classifies ideals whose variety in the plane is zero-dimensional, containing three points (counted with multiplicity). By \cite[Theorem~18.4]{combalg}, this Hilbert scheme can be covered by three affine charts corresponding to monomial bases $(Y^2,Y,1)$, $(X,Y,1)$, and $(X^2,X,1)$. 
Each element of $\Hilb^3_2$ in the second chart can be represented as
\begin{align}\label{eq:I3}
    I \,=\, \left\langle X^2-aX-bY-c, \, XY-dX-eY-f, \, Y^2-gX-hy-i \right\rangle  \,\subset \, \CC[X,Y] \, .
\end{align}
Consider the multiplication by $X$ and $Y$ on the quotient $\CC[X,Y]/I$. 
In the $\CC$-basis $(X,Y,1)$ of~$\CC[X,Y]/I$, these $\CC$-linear endomorphisms are represented by the matrices
\begin{align*}
    M_X\,=\, \begin{pmatrix}
       a & d & 1\\
       b & e& 0\\
       c & f & 0
    \end{pmatrix} \quad \text{and} \, \quad  M_Y\,=\, \begin{pmatrix}
       d & g & 0\\
       e & h& 1\\
       f & i & 0
    \end{pmatrix}.
\end{align*}
In order to be a Gröbner basis, all S-pairs of the generators of $I$~\eqref{eq:I3} need to reduce to zero. Writing this out, one observes that this is equivalent to requiring that the matrices $M_X$ and $M_Y$ commute. This occurs if and only if the following three equations are satisfied:
\begin{align*}
f = b g -d e \, ,\quad c = -a e+b d-b h+e^2 \, , \quad i = -a g+d^2-d h+e g   \, .  
\end{align*}
Thus, we get a $6$-dimensional affine chart for $\Hilb^3_2$ from the ideal above. This affine chart corresponds to the monomial basis $(X,Y,1)$ and is denoted by $U_{2+1}$ in \cite[Section 18.1]{combalg}.
\end{example}

\begin{example}[Based on {\cite[Example~18.6]{combalg}}]\label{ex:bordernongroebner} 
Consider the ideal $I\subset S=\CC[X,Y]$ generated by the four polynomials
\begin{align}\begin{split}
\label{ex:BorderBasis}
p_1 \,=\, X^2-X Y-2 X+Y+1, \quad p_2 \,=\,X^2 Y-2 X Y+2
   Y-1, \\ p_3 \,=\, Y^2-XY+X-1, \quad p_4 \,=\, X Y^2-X
   Y+X+Y-2
   \, .
\end{split}\end{align}
We first make a comment about Gröbner bases. Note that there is no term order that picks out the first terms of the polynomials in \eqref{ex:BorderBasis} as initial terms: $X\prec Y$ would imply that $X^2 \prec XY$ and $Y \prec X$ that $Y^2 \prec XY$. If we instead chose an order that picks out $XY$ as the initial term of $p_3$, the $S$-pair of $p_3$ and $p_4$ would not reduce to zero. Buchberger's $S$-pair criterion would now imply that $\{p_1,p_2,p_3,p_4\}$ is not a Gröbner basis of~$I$.

The $\CC$-vector space underlying the quotient ring $S/I$ is $4$-dimensional. In the $\CC$-basis $(1,X,Y,XY)$ of~$S/I$, the matrices representing the multiplication by $X$ and $Y$ are
\begin{align*}
    M_X \,=\, \begin{pmatrix}
        0 & -1 & 0 & 1 \\
        1 & 2 & 0 & 0 \\
        0 & -1 & 0 & -2 \\
        0 & 1 & 1 & 2 \\
    \end{pmatrix} \quad \text{and} \quad  \, M_Y \,=\, \begin{pmatrix}
       0 & 0 & 1 & 2 \\
       0 & 0 & -1 & -1 \\
       1 & 0 & 0 & -1 \\
       0 & 1 & 1 & 1 \\
    \end{pmatrix},
\end{align*}
which can be conveniently read from the the generators in \eqref{ex:BorderBasis} of $I$ due to their special form. One can check that the matrices $M_X$ and $M_Y$ commute. As we explain in a later part of this article, this implies that $p_1,\ldots,p_4$ as in~\eqref{ex:BorderBasis} constitute a \textit{border basis} of $I$ with respect to the order ideal $\mathcal{O}=\{1,X,Y,XY\}$.\footnote{Border bases are distinct from Gröbner bases: one can easily read off the multiplication matrices from the border basis, but it doesn't need to satisfy Buchberger's criterion. If it is ``connected to $1$,'' the criterion it has to satisfy is that the multiplication matrices commute, see~\cite{mourrain1999new}.} The matrices $M_X$ and $M_Y$ then are called \emph{formal multiplication matrices}. In terms of the Hilbert scheme of four points in the plane, $\Hilb_2^4$, this ideal naturally lives in the affine chart corresponding to the monomial basis $(1,X,Y,XY)$, which is denoted by $U_{2+2}$ in {\cite[Section 18.2]{combalg}} the theory of which we recall in \Cref{sec:borderbases}.

A Gröbner basis for $I$ with respect to degree reverse lex with $X\succ Y$ is given by: 
\begin{align}
 I \,=\, \left\langle \underline{X Y}-X-Y^2+1, \, \underline{X^2}-3
   X-Y^2+Y+2, \, X+\underline{Y^3}-2\right\rangle \, .
   \label{eq:idealInDifferentCharts}
\end{align}
For this monomial ordering, the standard monomials of $I$ are $\{1,X,Y,Y^2\}$,  which corresponds to another affine chart of the Hilbert scheme $\Hilb_2^4$,  denoted $U_{2+1+1}$ in {\cite[Section~18.1]{combalg}}. In this different basis for $S/I$, we represent the multiplication by $X$ and $Y$ by matrices $\widetilde{M}_X$ and~$\widetilde{M}_Y$: 
\begin{align}
    \widetilde{M}_X\,=\, \begin{pmatrix}
         0 & -2 & -1 & 1 \\
         1 & 3 & 1 & 0 \\
        0 & -1 & 0 & -1 \\
         0 & 1 & 1 & 1 
    \end{pmatrix} \quad \text{and} \quad \, \widetilde{M}_Y\,=\, \begin{pmatrix}
        0 & -1 & 0 & 2 \\
        0 & 1 & 0 & -1 \\
       1 & 0 & 0 & 0 \\
       0 & 1 & 1 & 0 
    \end{pmatrix}.
\end{align}
The change of basis from $\{ 1,X,Y,XY\}$ to $\{1,X,Y,Y^2\}$ is encoded by the invertible matrix
\begin{align*}
B \,=\, \begin{pmatrix}
   1 & 0 & 0 & -1 \\
 0 & 1 & 0 & 1 \\
 0 & 0 & 1 & 0 \\
 0 & 0 & 0 & 1 
 \end{pmatrix} \, \in \, \GL_4(\mathbb{C}).
\end{align*} 
The matrices $ \widetilde{M}_X$ and $M_X$ (respectively, $ \Tilde{M}_Y$ and $M_Y$) are related via a conjugation by~$B$:
\begin{align*}
\widetilde{M}_i \,=\, B \, M_i \, B^{-1} , \quad  i=X,Y \, .
\end{align*}
encoding a usual change of basis.  
\end{example}
The previous example showed a change of basis for the vector space $S/I$ with a nice geometric meaning: the ideal in \eqref{eq:idealInDifferentCharts} belongs to two different affine charts of the Hilbert scheme $\Hilb_2^4$. We will proceed to explain these affine charts of the Hilbert scheme of points from the point of view of border bases. 

\medskip
The Hilbert scheme $\Hilb_n^m$ of $m$ points in $\mathbb{A}_{\CC}^n$ classifies all zero-dimensional ideals \mbox{$I\subset S=\CC[X_1,\ldots,X_n]$} that give rise to $\CC$-vector spaces $S/I$ of dimension~$m$. These ideals then can be grouped according to which $m$ monomials give a basis for them.

An \textit{order ideal}\footnote{The term ``ideal'' is to be understood in the setup of partially ordered sets, not in the  algebraic~sense.} is a non-empty subset $\lambda \subset \mathbb{N}^n$ such that
\begin{align*}
\mathbf{u} \in \lambda \, ,\mathbf{v} \in \mathbb{N}^n \, ,\mathbf{v} \leq \mathbf{u} \implies \mathbf{v}\in \lambda \, ,
\end{align*}
where $(u_1,u_2,\ldots,u_n)\leq(v_1,v_2,\ldots,v_n) \iff u_1 \leq v_1,u_2<v_2,\ldots,u_n\leq v_n$ coordinate-wise. Equivalently, an order ideal is the set of exponents on monomials outside of a monomial ideal, see \cite[Section 18.4]{combalg}. The Hilbert scheme $\Hilb_n^m$ is covered by open subschemes,  
with each open subscheme $U_\lambda\subset \Hilb_n^m$ consisting of all ideals $I \in \Hilb_n^m$ such that $\{\mathbf{X}^\mathbf{u} | \mathbf{u\in \lambda}\}$ is a $\CC$-basis of $S/I$, with $|\lambda|=m$. The equations defining the affine subscheme $U_\lambda$ of $\Hilb_n^m$ can be conveniently spelled out with the use of \textit{border bases}, whose definition we recall in \Cref{sec:borderbases}. These equations are obtained from requiring that certain matrices commute, see \Cref{prop:Ulambda_equations}. We have already seen an example of this in \Cref{ex:MxMyCommute}.

\subsection{Border bases in polynomial rings}\label{sec:borderbases}
This section recalls border bases in polynomial rings and closely follows the presentation~\cite{KKR05} of Kehrein, Kreuzer, and Robbiano. For additional references, see for instance~\cite{KR05,KR11}.

Let $S=\CC[X_1,X_2,\ldots,X_n]$ be the polynomial ring in $n$ variables and denote by $\mathbb{T}_n$ the set of monomials in $X_1,\ldots,X_n$. Let $\lambda\subset \NN^n$ be an order ideal. Sometimes, one also refers to the set of monomials $\mathcal{O}_\lambda=\{\mathbf{X}^\mathbf{u}|\mathbf{u}\in \lambda\}\subset \mathbb{T}_n$ as an {\em order ideal}. The aim of border bases is to explicitly construct generators of an ideal $I_\lambda\subset S$ such that $S/I_\lambda$ is an $m$-dimensional vector space over~$\CC$ with a basis given by the monomials $\mathcal{O}_\lambda=\{\mathbf{X}^\mathbf{u}|\mathbf{u}\in \lambda\}$. Before giving the definition of a border basis, we recall required concepts.

\begin{definition}\label{def:corners}
Let $\mathcal{O}_{\lambda}$ be an order ideal. The minimal generators of the monomial ideal generated by $\mathbb{T}_n\setminus \mathcal{O}_{\lambda}$ are called the {\em corners} of $\mathcal{O}_{\lambda}$.
\end{definition}

\begin{definition}\label{def:border}
The \textit{border} of the order ideal $\mathcal{O}_\lambda$ is 
\begin{align}\label{eq:border}
\partial \mathcal{O}_\lambda \,=\, (X_1 \mathcal{O}_\lambda \cup X_2 \mathcal{O}_\lambda\cup \cdots  \cup X_n \mathcal{O}_\lambda  ) \backslash \mathcal{O}_\lambda \, .
\end{align}
\end{definition}
The {\em first border closure} of $\cO_\lambda$ is $\overline{\partial \cO}\coloneqq \cO\cup \partial \cO$.
Iteratively, for $k\geq 2$, one defines the {\em $k$-th border} as 
\begin{align*}
    \partial^k\cO_{\lambda} \, \coloneqq \, \partial\left( \overline{\partial^{k-1}\cO_{\lambda}}\right) \,.
\end{align*}
and the {\em $k$-th border closure} of $\cO_\lambda$ as $\overline{\partial^k\cO_\lambda}\coloneqq \overline{\partial^{k-1}\cO_\lambda}\cup \partial^k \cO_\lambda$. 
For convenience, one sets $\partial^0 \cO_\lambda=\overline{\partial^0\cO_\lambda}=\cO_\lambda$.
The {\em index} of a monomial $t\in \mathbb{T}_n$, denoted $\ind_{\lambda}t$, is the smallest $k$ such that $t \in \partial^k\mathcal{O}_{\lambda}$. The {\em index} of a polynomial $f\in S$ is the maximal index of its monomials, i.e., for $f = \sum_\mathbf{a} c_\mathbf{a} \mathbf{X}^\mathbf{a}$, its index is
\begin{align}\label{eq:index}
    \ind_{\lambda}(f) \,=\, \max \{\ind_{\lambda} \mathbf{X}^\mathbf{a}  \, |\, c_\mathbf{a} \neq 0\}
\end{align}
We sometimes drop the subscript from $\cO_{\lambda}$, since the $\lambda$ can be understood implicitly from the elements of $\cO=\cO_{\lambda}$.
We will illustrate these concepts with an example. 
\begin{example}
\label{ex:BorderIdeal1}
Consider the order ideal $\mathcal{O}=\{1,X_1,X_2,X_1 X_2\}\subset \mathbb{T}^2$. We can also identity this order ideal by the exponents of its monomials, i.e., the set $\lambda = \{(0,0),(1,0),(0,1),(1,1)\}\subset \NN^2$. The first and second borders  of this order ideal are given by $\partial \mathcal{O}=\{X_1^2,X_1^2 X_2,X_1 X_2^2,X_2^2\}$ and $\partial^2 \mathcal{O}=\{X_1^3,X_1^3 X_2,X_1^2 X_2^2, X_2^3 X_1,X_2^3\}$.
We visualize these integer points for $\mathcal{O}$ and its borders in the plane, see \Cref{fig:border}.
\begin{figure}[h]
\begin{tikzpicture}[scale=.82, transform shape]
\draw[->] (0,0)--(4,0) node[right]{\large $i$};
\draw[->] (0,0)--(0,4) node[above]{\large $j$};
\filldraw (0,0) circle (3.8pt);
\filldraw (1,0) circle (3.8pt);
\filldraw (1,1) circle (3.8pt);
\filldraw (0,1) circle (3.8pt);
\draw[thick,teal] (0,2) circle (3.65pt);
\draw[thick] (1,2) circle (3.65pt);
\draw[thick,teal] (2,0) circle (3.65pt);
\draw[thick] (2,1) circle (3.65pt);
\draw (-.3,-.3) node {$0$};
\draw (1,-.35) node {$1$};
\draw (-.3,1) node {$1$};
\draw[thick] (0,3) node {\large $\times$};
\draw[thick] (1,3) node {\large $\times$};
\draw[thick] (2,2) node {\large $\times$};
\draw[thick] (3,0) node {\large $\times$};
\draw[thick] (3,1) node {\large $\times$};
\end{tikzpicture}
\caption{The black dots at $(i,j)$  correspond to the elements $X_1^i X_2^j$ of the order ideal $\mathcal{O}$ of \Cref{ex:BorderIdeal1}. We also picture the elements in the border $\partial \mathcal{O}$ with circles (with the turquoise circles corresponding to the corners) and the elements of the second border $\partial^2 \mathcal{O}$ with crosses.
}
\label{fig:border}
\end{figure}
\end{example}

A {\em border prebasis} with respect to an order ideal $\lambda$ is a set of polynomials each of which is a $\CC$-linear combination of one term in $\partial \mathcal{O}_\lambda$ and all the terms of~$\mathcal{O}_\lambda$ in the following sense. If $\mathcal{O}_\lambda = \{t_1,t_2,\ldots,t_m\}\subset \mathbb{T}_n$ is an order ideal and $\partial \mathcal{O}_\lambda = \{b_1,b_2,\ldots,b_p\}$ its border, then an $\cO_{\lambda}$-border prebasis is a set of polynomials
\begin{align*}
G_\lambda \,=\, \{g_1,g_2,\ldots,g_p\} \, ,
\end{align*}
where 
\begin{align}\label{eq:pbc}
g_j \,=\, b_j - \sum_{i=1}^{m} c_{i,j}t_i  
\end{align}
with $c_{i,j}\in \CC$. The element $g_j$ as in~\eqref{eq:pbc} is said to be {\em marked} by the border element~$b_j$.

An important fact about border prebases is that a division algorithm exists which allows us to compute normal forms, in analogy to Gröbner basis theory.

\begin{proposition}[{\cite[Proposition~4.2.10]{KKR05}}]
Given an $\mathcal{O}_\lambda$-border prebasis $\{g_1,\ldots,g_p\}$ with $\mathcal{O}_\lambda = \{t_1,t_2,\ldots,t_m\}$ and border $\partial \mathcal{O}_\lambda = \{b_1,b_2,\ldots,b_p\}$, then for any $f\in S$ we can find polynomials $f_j\in S$ and coefficients $c_j\in \CC$ such that
\begin{align}\label{eq:div_alg}
f \,=\, f_1 g_1 +\cdots+f_p g_p + c_1 t_1 + \cdots +c_m t_m 
\end{align}
and $\deg(f_i) \leq \ind_{\lambda}(f)-1$ for all $i$ with $f_i g_i\neq 0$.
\end{proposition}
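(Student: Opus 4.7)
My plan is to prove the existence of such a representation by induction on $k \coloneqq \ind_\lambda(f)$. The base case $k=0$ is immediate: such an $f$ already lies in $\Span_\CC(\cO_\lambda)$, so I may take all $f_j = 0$ and write $f = c_1 t_1 + \cdots + c_m t_m$, with the degree constraint holding vacuously.

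For the inductive step, assume the statement holds for all polynomials of index strictly less than $k$, and let $f$ have index $k \geq 1$. Because the claim is additive in $f$, it suffices to produce the representation for a single monomial $\mathbf{X}^\mathbf{a}$ with $\ind_\lambda(\mathbf{X}^\mathbf{a}) = k$ and then sum over the monomials of $f$. The crucial auxiliary fact I will use is that every such monomial admits a decomposition $\mathbf{X}^\mathbf{a} = X_i\, \mathbf{X}^\mathbf{c}$ with $\mathbf{X}^\mathbf{c}$ of index exactly $k-1$. This follows by unwinding the definition of $\partial^k\cO_\lambda = \partial(\overline{\partial^{k-1}\cO_\lambda})$: the membership $\mathbf{X}^\mathbf{a}\in\partial^k\cO_\lambda$ gives some $i$ and some $\mathbf{X}^\mathbf{c}\in\overline{\partial^{k-1}\cO_\lambda}$ with $\mathbf{X}^\mathbf{a}=X_i\,\mathbf{X}^\mathbf{c}$; if $\mathbf{X}^\mathbf{c}$ had index at most $k-2$, multiplying by $X_i$ would place $\mathbf{X}^\mathbf{a}$ inside $\overline{\partial^{k-1}\cO_\lambda}$, contradicting $\ind_\lambda(\mathbf{X}^\mathbf{a})=k$.

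With this decomposition in hand, the inductive step splits into two cases. If $k=1$, then $\mathbf{X}^\mathbf{c}\in\cO_\lambda$, so $\mathbf{X}^\mathbf{a}\in X_i\cO_\lambda\setminus\cO_\lambda\subset\partial\cO_\lambda$, meaning $\mathbf{X}^\mathbf{a}=b_j$ for some $j$. The border prebasis then yields the rewriting $\mathbf{X}^\mathbf{a}=g_j+\sum_i c_{i,j}t_i$, in which the coefficient of $g_j$ is the constant $1$, of degree $0=k-1$. If $k\geq 2$, I apply the inductive hypothesis to $\mathbf{X}^\mathbf{c}$, obtaining $\mathbf{X}^\mathbf{c}=\sum_j h_j g_j + r$ with $\deg(h_j)\leq k-2$ whenever $h_j g_j\neq 0$, and $r\in\Span_\CC(\cO_\lambda)$. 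Multiplying by $X_i$ gives $\mathbf{X}^\mathbf{a}=\sum_j (X_i h_j)\, g_j + X_i r$; the residual $X_i r$ has index at most $1$, so by the previous case it admits a further rewriting $X_i r=\sum_j \gamma_j g_j + r'$ with $\gamma_j\in\CC$ and $r'\in\Span_\CC(\cO_\lambda)$. Collecting terms yields $\mathbf{X}^\mathbf{a}=\sum_j (X_i h_j+\gamma_j)\, g_j + r'$, with each coefficient of $g_j$ of degree at most $k-1$, as required.

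The step most prone to bookkeeping errors, and what I regard as the main obstacle, is tracking the degree bound $\deg(f_j)\leq \ind_\lambda(f)-1$ as the monomial-wise reductions are summed. Contributions coming from a monomial of $f$ of index $k'<k$ produce coefficient polynomials of degree at most $k'-1<k-1$ by the inductive hypothesis, so they do not spoil the bound for the index-$k$ contributions; moreover, within the inductive step itself, degrees only grow under multiplication by a single variable $X_i$, which is precisely what raises the bound from $k-2$ to $k-1$. Once this accounting is carried out, the representations for the individual monomials, scaled by the coefficients $c_\mathbf{a}$ appearing in $f$, combine into the claimed global representation~\eqref{eq:div_alg}.
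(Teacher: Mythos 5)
Your proof is correct and establishes the existence of the claimed representation, but by a route genuinely different from the one the paper takes. The paper does not argue the statement directly: it cites KKR05 and then presents the border division algorithm (Algorithm~D1--D4), whose step D4 rewrites a term $h_1$ of maximal index $k$ \emph{in one shot} as $t' b_i$ with a cofactor $t'$ of degree $k-1$ and a border element $b_i$; the degree bound $\deg(f_i)\leq \ind_\lambda(f)-1$ then falls out because each cofactor added to $f_i$ has exactly degree $\ind_\lambda(h)-1\leq \ind_\lambda(f)-1$. That route relies on a factorization lemma for monomials of index $k$ as (degree-$(k-1)$ term) $\times$ (border element). You instead peel off a single variable at a time, writing $\mathbf{X}^\mathbf{a}=X_i\,\mathbf{X}^\mathbf{c}$ with $\ind_\lambda(\mathbf{X}^\mathbf{c})=k-1$, apply the inductive hypothesis to $\mathbf{X}^\mathbf{c}$, multiply through by $X_i$, and then perform a second reduction to absorb the index-$\leq 1$ residual $X_i r$. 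This decomposition needs only the definition of iterated borders together with the containment $X_i\,\overline{\partial^{k-2}\cO_\lambda}\subset \overline{\partial^{k-1}\cO_\lambda}$, which you correctly justify; the trade-off is the extra two-stage clean-up at each inductive step. Your degree bookkeeping, both within a single monomial's reduction and when summing over monomials of varying index, is handled correctly. Both approaches are valid; the paper's algorithmic version is more directly usable for computation, while yours is a cleaner stand-alone existence argument that avoids the border-factorization lemma.
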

Such $f_i$'s and $c_j$'s can be obtained as follows.

\begin{algorithm}[Border division algorithm]\label{alg:borderdivision}
\begin{enumerate}[D1]
\item[]
    \item Set $f_1=\cdots=f_p=0$, $c_1=\cdots=c_m=0$, and $h=f$.
    \item If $h=0$, return $(f_1,\ldots,f_p,c_1,\ldots,c_m)$ and stop.
    \item If $\ind_\lambda(h)=0$, there exist $c_1,\ldots,c_m \in \CC$ such that $h=c_1t_1+\cdots+c_mt_m$. Find such~$c_j$'s, return $(f_1,\ldots,f_p,c_1,\ldots,c_m)$, and stop.
    \item If $\ind_\lambda(h)>0$, let $h=a_1h_1+\cdots+a_sh_s$ such that $a_i\in \CC \backslash \{0\}$ and $h_i\in \mathbb{T}_n$ satisfy $\ind_\lambda(h_1)=\ind_\lambda(h)$. Find the minimal $i$ s.t.\ $h_1=t'b_i$ with $t'\in \mathbb{T}_n$ of degree $\ind_{\lambda}(h)-1$. Subtract $a_1t'g_i$ from~$h$, add $a_1t'$ to $f_i$,  and continue with step D2.
\end{enumerate}
\end{algorithm}
The algorithm terminates after a finite number of iterations, because there are only finitely many terms of index less or equal to the index of~$h$. The representation~\eqref{eq:div_alg} does not depend on the choice of $h_1$ in step $D4$, because $h$ gets replaced by terms of strictly smaller index.
However, the constants $\{c_1,\ldots,c_m\}$ obtained this way are not unique using a border prebasis, and we will need to impose additional conditions to achieve uniqueness. 

Before addressing that, we associate matrices $M_i$, $i=1,\ldots,n$, to a border prebasis.
The $k$-th column of $M_i$ is defined to be
\begin{align}\label{eq:formalmult}
    \left({M_i}\right)_{\ast k} \,=\, 
    \begin{cases}
      e_r & \text{if} \ X_it_k=t_r, \\
  (c_{1,s},\ldots,c_{m,s})^\top & \text{if} \ X_it_k=b_s \, ,
    \end{cases}
\end{align}
where $e_r$ denotes the $r$-th standard unit vector in $\RR^m$ and the $c_{j,s}$'s are as in~\eqref{eq:pbc}. We note that the $k$-th column of $M_i$ consists precisely of $c_1,\ldots,c_m$ as in~\eqref{eq:div_alg} obtained from applying the division algorithm to~$X_it_k$, i.e., the elements of the order ideal multiplied by~$X_i$. Because of that, the $M_i$'s in~\eqref{eq:formalmult} are called {\em formal multiplication matrices}. 

\medskip
We now recall the definition of border bases. 
\begin{definition}
Let $I\subset S=\CC[X_1,\ldots,X_n]$ be a zero-dimensional ideal, $\mathcal{O}_\lambda = \{t_1,t_2,\ldots,t_m\}\subset \mathbb{T}_n$ an order ideal, and $G=\{g_1,g_2,\ldots,g_p\}\subset I$ an $\mathcal{O}_\lambda$-border prebasis. The set $G$ is an {\em $\mathcal{O}_\lambda$-border basis} of $I$ if the residue classes of $t_1,t_2,\ldots,t_m$ form a $\CC$-vector space basis of~$S/I$. 
\end{definition}
In particular, this also means that any border basis of $I$ also generates $I$ as an $S$-ideal. 

In~\cite{KK06}, algorithms are provided to compute a border basis from a given set of generators, relating to a degree-compatible term ordering. As also pointed out therein, border bases behave numerically better than Gröbner bases, see also~\cite{Stetter}. In~\cite{Kaspar13}, the original border basis algorithm is extended in a way that the resulting border basis does not relate to any term ordering; instead of initial terms, one uses ``markings.''
Concerning software, border bases can be computed with the package {\tt borderbasix}~\cite{borderbasix}, which can also be used to solve zero-dimensional systems.
It is important to note that not every order ideal gives rise to a border basis. \Cref{ex:symmnoborder} demonstrates that fact in the rational Weyl algebra. For a given set of distinct points in~$\CC^n$, \cite{HKP19} provides an algorithm to compute all order ideals $\cO$ for which there exists an $\cO$-border basis of their ideal in $\CC[X_1,\ldots,X_n]$.

\smallskip

A crucial theorem in the characterization of border bases in terms of formal multiplication matrices is the following.

\begin{theorem}[{\cite[Theorem 4.3.17]{KKR05}}]\label{thm:borderbasescomm}
Let $\mathcal{O}_\lambda = \{t_1,t_2,\ldots,t_m\}$ be an order ideal. Then an $\cO_{\lambda}$-border prebasis $G=\{g_1,\ldots,g_p\}$ is an $\cO_{\lambda}$-border basis of $I=\langle g_1,\ldots,g_p\rangle$ if and only if the formal multiplication matrices are pairwise commuting. 
\end{theorem}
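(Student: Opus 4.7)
The plan is to prove the two implications separately, with the forward direction being essentially tautological and the reverse direction requiring a concrete construction that witnesses the linear independence of the classes $[t_1],\ldots,[t_m]$ in $S/I$.

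For the ($\Rightarrow$) direction, I would argue that if $G$ is a border basis, then $[t_1],\ldots,[t_m]$ is a $\mathbb{C}$-basis of $S/I$, and the matrix $M_i$ defined in~\eqref{eq:formalmult} is, by its very construction, the matrix of multiplication by $X_i$ on $S/I$ with respect to this basis: the first case of~\eqref{eq:formalmult} handles $X_i t_k \in \mathcal{O}_\lambda$, while the second case uses $g_s \in I$ to rewrite $[X_it_k]=[b_s]$ as $\sum_i c_{i,s}[t_i]$. Since $S$ is commutative, the endomorphisms ``multiplication by $X_i$'' and ``multiplication by $X_j$'' on $S/I$ commute, hence so do their matrix representations.

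For the ($\Leftarrow$) direction, the border division algorithm (\Cref{alg:borderdivision}) already expresses every $f \in S$ as an element of $I$ plus a $\mathbb{C}$-linear combination of $t_1,\ldots,t_m$, so the residue classes span $S/I$. What remains is linear independence modulo $I$. The key idea is that pairwise commutativity of $M_1,\ldots,M_n$ is exactly the condition needed to make the assignment $X_i \mapsto M_i$ extend to a $\mathbb{C}$-algebra homomorphism $\rho\colon S \to \mathrm{End}_{\mathbb{C}}(\mathbb{C}^m)$. Since an order ideal is closed under divisors, $1 \in \mathcal{O}_\lambda$; reorder so that $t_1 = 1$ and let $e_1,\ldots,e_m$ denote the standard basis of $\mathbb{C}^m$ indexed so that $e_k$ corresponds to $t_k$. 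Define the $\mathbb{C}$-linear map $\Phi\colon S \to \mathbb{C}^m$ by $\Phi(f) = \rho(f)\cdot e_1$.

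The proof then boils down to two verifications. First, induct on the total degree of $t_k \in \mathcal{O}_\lambda$ to show $\Phi(t_k) = e_k$: writing $t_k = X_i t_{k'}$ with $t_{k'} \in \mathcal{O}_\lambda$, the first case of~\eqref{eq:formalmult} yields $\Phi(t_k) = M_i \Phi(t_{k'}) = M_i e_{k'} = e_k$. Second, for each border element $b_s = X_i t_k$, the second case of~\eqref{eq:formalmult} gives $\Phi(b_s) = M_i e_k = \sum_j c_{j,s} e_j = \sum_j c_{j,s}\Phi(t_j)$, so $\Phi(g_s) = 0$. Since $\rho$ is a ring homomorphism, $\Phi(f g_s) = \rho(f)\Phi(g_s) = 0$ for every $f \in S$, hence $\Phi(I) = 0$ and $\Phi$ descends to $\bar{\Phi}\colon S/I \to \mathbb{C}^m$ with $\bar{\Phi}([t_k]) = e_k$. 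Any relation $\sum_k c_k [t_k] = 0$ in $S/I$ would force $\sum_k c_k e_k = 0$ in $\mathbb{C}^m$, yielding $c_1 = \cdots = c_m = 0$.

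The main obstacle is precisely the well-definedness of $\rho$: without commutativity of the $M_i$'s, the assignment $X_i \mapsto M_i$ would fail to respect the relations of $S$ and the above argument would collapse. Everything else — the inductive evaluation $\Phi(t_k) = e_k$, the vanishing $\Phi(g_s) = 0$, and the descent to $S/I$ — is straightforward bookkeeping built on top of the border prebasis structure and the division algorithm.
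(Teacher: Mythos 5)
Your proof is correct and follows essentially the same strategy the paper uses. Although the paper cites this theorem from \cite{KKR05} without reproducing a proof, your construction — using commutativity of the $M_i$'s to obtain an algebra homomorphism $\rho\colon S\to\mathrm{End}_{\mathbb{C}}(\mathbb{C}^m)$, inducting on degree to show $\Phi(t_k)=e_k$, verifying $\Phi(g_s)=0$ so that $\Phi$ descends to $S/I$, and reading off linear independence of the $[t_k]$ — mirrors exactly the argument the paper gives for the non-commutative analog, \Cref{prop:borderbasescomm_Rn}, where a module structure is built on the span of the $t_j$'s, cyclicity is established by induction via $\varphi(t_i,t_1)=t_i$, and one checks $R_nI\subseteq\ker\widetilde{\Theta}$.
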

These relations of pairwise commutation encode precisely the equations that describe the affine subschemes $U_\lambda$ of the Hilbert scheme $\Hilb_n^m$ from \Cref{sec:Hilb}. 

We summarize some results from \cite[Chapter~18]{combalg} in 
\begin{proposition}\label{prop:Ulambda_equations}
The Hilbert scheme $\Hilb_n^m$ is covered by affine open subschemes $U_\lambda$, where $U_\lambda = \{I\subset S \,|\,  \cO_\lambda \textrm{ is a $\mathbb{C}$-basis of }S/I  \}$. Thus, every ideal $I_\lambda\in U_\lambda$ admits an $\cO_\lambda$-border basis. Moreover, we can give affine coordinates to $U_\lambda$ in the following way: Let $I_\lambda$ be generated by an $\cO_\lambda$-border prebasis. Then the condition that the formal multiplication matrices computed from this $\cO_\lambda$-border prebasis commute cut out an affine subscheme $U_\lambda$~of~$\Hilb_n^m$. 
\end{proposition}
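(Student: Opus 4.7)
The plan is to assemble the three claims---the covering, the existence of a border basis for each $I\in U_\lambda$, and the description of $U_\lambda$ as an affine subscheme cut out by commutation relations---by invoking \Cref{thm:borderbasescomm} together with the division algorithm~\ref{alg:borderdivision}.

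For the covering claim, given any $I\in \Hilb_n^m$, I would fix an arbitrary term order $\prec$ on $\mathbb{T}_n$ and form the initial ideal $\init_\prec(I)$; its set of standard monomials is closed under taking divisors, so it is an order ideal $\cO_\lambda$, and by the standard Gröbner basis theory $|\lambda|=\dim_\CC(S/I)=m$ and the monomials in $\cO_\lambda$ descend to a $\CC$-basis of $S/I$. Hence $I\in U_\lambda$, which shows $\bigcup_\lambda U_\lambda = \Hilb_n^m$ as $\lambda$ ranges over order ideals of size~$m$.

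For the existence of a border basis on each $U_\lambda$, fix $I\in U_\lambda$ with $\cO_\lambda=\{t_1,\ldots,t_m\}$ and border $\partial\cO_\lambda=\{b_1,\ldots,b_p\}$. Since the classes of the $t_i$'s form a $\CC$-basis of $S/I$, for each border element $b_j$ there exist unique constants $c_{i,j}\in\CC$ with $b_j\equiv\sum_{i=1}^m c_{i,j}t_i \pmod{I}$, so $g_j \coloneqq b_j-\sum_i c_{i,j}t_i \in I$. The set $G=\{g_1,\ldots,g_p\}$ is an $\cO_\lambda$-border prebasis by construction. By the division algorithm~\ref{alg:borderdivision}, every $f\in S$ reduces modulo $G$ to a $\CC$-linear combination of the $t_i$'s, so $S/\langle G\rangle$ is spanned by $\cO_\lambda$; combining this with the inclusion $\langle G\rangle\subseteq I$ and the equality of $\CC$-dimensions $\dim_\CC(S/I)=m=|\cO_\lambda|$ forces $\langle G\rangle=I$ and implies that the $t_i$'s are actually linearly independent in $S/\langle G\rangle$, so $G$ is an $\cO_\lambda$-border basis of~$I$.

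For the affine-scheme description, I would parametrize $\cO_\lambda$-border prebases by the coefficients $c=(c_{i,j})\in \CC^{mp}$, which give affine coordinates on $\mathbb{A}^{mp}$. The assignment in~\eqref{eq:formalmult} then produces formal multiplication matrices $M_1(c),\ldots,M_n(c)$ whose entries are polynomial (indeed linear) in the $c_{i,j}$'s, so the commutator entries $[M_i,M_j]_{k\ell}$ are polynomials in $c$ and their vanishing defines an affine subscheme $V\subset\mathbb{A}^{mp}$. By \Cref{thm:borderbasescomm}, a point $c\in V$ corresponds precisely to an $\cO_\lambda$-border basis, and the ideal it generates lies in $U_\lambda$; conversely, by the previous paragraph every ideal in $U_\lambda$ produces a unique such $c\in V$. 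This bijection is functorial in families (the $c_{i,j}$ are literal matrix entries of reduction-mod-$I$), giving the scheme-theoretic identification $U_\lambda\cong V$.

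The technical heart is the last paragraph: one must check that every commuting tuple of matrices of the shape~\eqref{eq:formalmult} really produces a border basis (not just a prebasis whose ideal has the right quotient up to dimension), and that the assignment $c\mapsto I$ is an isomorphism of schemes rather than a mere bijection on points. The first point is exactly \Cref{thm:borderbasescomm}, so the main obstacle is the second: verifying that the universal family over $V$ carries the expected flat structure so that $V$ represents the functor $U_\lambda$ in the sense of~\eqref{eq:Hilbm}. I would address this by exhibiting, over $V$, the explicit $\cO_V$-module $\cO_V[X_1,\ldots,X_n]/\langle g_1,\ldots,g_p\rangle$, which is free of rank $m$ with basis $\cO_\lambda$ by the division algorithm applied with parameters, and arguing that this is the universal family of quotients classified by $U_\lambda\subset \Hilb_n^m$.
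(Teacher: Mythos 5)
The paper does not prove this proposition; it presents it as a summary of results from \cite[Chapter~18]{combalg}, so there is no in-text proof to compare against. Your reconstruction follows the standard line of argument from that reference and is essentially correct, and you are right that the technical heart is the scheme-theoretic (not merely set-theoretic) identification of $V$ with $U_\lambda$. One imprecision worth noting in the last paragraph: the division algorithm with parameters only shows that $\cO_\lambda$ \emph{spans} the module $\cO_V[X_1,\ldots,X_n]/\langle g_1,\ldots,g_p\rangle$, not that it is a free basis. Freeness is exactly where the commutation relations defining $V$ enter; it does not follow from the division algorithm alone. The cleanest route is to equip $\cO_V^{\,m}$ directly with the $\cO_V[X]$-module structure $X_i\mapsto M_i$ (well-defined on $V$ precisely because the $M_i$ commute), observe that evaluation at the basis vector indexed by $1\in\cO_\lambda$ gives a surjection $\cO_V[X]\twoheadrightarrow\cO_V^{\,m}$ sending $t_k\mapsto e_k$, and that the kernel is $\langle g_1,\ldots,g_p\rangle$ by the division algorithm. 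This exhibits the universal quotient as the free module $\cO_V^{\,m}$ and gives flatness directly, without appealing to constancy of fiber dimension (which would anyway require $V$ reduced, something not known a priori).
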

More explicitly, let $\cO_\lambda=\{t_1,t_2,\ldots,t_m\}$, $\partial \cO_\lambda=\{b_1,b_2,\ldots,b_p\}$, and 
\begin{align*}
I_\lambda \,=\, \big\langle b_1 - \sum_{j=1}^{m}c_{1,j}\,t_j, \, b_2 - \sum_{j=1}^{m}c_{2,j}\,t_j,\ldots, \, b_p - \sum_{j=1}^{m}c_{p,j}\,t_j  \big\rangle.
\end{align*}
The entries of the formal multiplication matrices computed from this $\cO$-border prebasis then are either $0$, $1$, or some $c_{i,j}$, $i\in\{1,2,\ldots,p\}$, $j\in\{1,2,\ldots,m\}$. Thus, the conditions for the formal multiplication matrices to commute generate an ideal  $J_{\textrm{comm}}\subset \mathbb{C}[c_{i,j}] $. Then the affine coordinates for $U_\lambda$ are the equivalence classes $[c_{i,j}] \in \mathbb{C}[c_{i,j}]/J_{\textrm{comm}}$. We saw 
an example of this in \Cref{ex:MxMyCommute}, where ${a,b,d,e,g,h}$ served as coordinates for $U_{2+1} \in \Hilb_2^3$.

\medskip
Another feature of border bases is that they can be used to determine reduced Gröbner bases. 
Let $I\subset S$ be a zero-dimensional ideal and $\prec$ a term order on $\mathbb{T}_n$. We denote by $\mathcal{O}_{\prec}(I)$ the order ideal consisting of the standard monomials of a Gröbner basis of $I$ w.r.t.~$\prec$.

\begin{proposition}[{\cite[Proposition~4.3.6]{KKR05}}]\label{prop:borderreducedGr}
 Let $I$ and $\prec$ be as above. Then there exists a unique $\mathcal{O}_\prec(I)$-border basis $G$ of $I$, and the reduced Gröbner basis of $I$ with respect to~$\prec$ is the subset of $G$ consisting of the polynomials that are marked by the corners of~$\cO_\prec(I)$.
\end{proposition}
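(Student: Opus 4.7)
The plan is to prove the two assertions in sequence: first establishing existence and uniqueness of the $\mathcal{O}_\prec(I)$-border basis, and then identifying its ``corner subset'' with the reduced Gröbner basis.

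For the first assertion, I would exploit the defining property of $\mathcal{O}_\prec(I) = \{t_1,\ldots,t_m\}$: by standard Gröbner basis theory, its residue classes form a $\mathbb{C}$-basis of $S/I$. Given the border $\partial\mathcal{O}_\prec(I) = \{b_1,\ldots,b_p\}$, each $b_j$ admits a \emph{unique} expansion $\overline{b_j} = \sum_{i=1}^m c_{ij}\,\overline{t_i}$ in $S/I$, so setting $g_j := b_j - \sum_{i=1}^m c_{ij}\, t_i$ produces an element of $I$ of the shape required by~\eqref{eq:pbc}. The set $G = \{g_1,\ldots,g_p\}$ is then an $\mathcal{O}_\prec(I)$-border prebasis contained in $I$, and since the $t_i$ already form a basis of $S/I$, $G$ is an $\mathcal{O}_\prec(I)$-border basis by definition. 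Uniqueness of $G$ is immediate from the uniqueness of the scalars $c_{ij}$.

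For the second assertion, I would first identify the corners of $\mathcal{O}_\prec(I)$ (in the sense of \Cref{def:corners}) with the minimal monomial generators of the initial ideal $\mathrm{in}_\prec(I)$; denote them $b_{j_1},\ldots,b_{j_k}$. These lie inside $\partial\mathcal{O}_\prec(I)$, because by minimality every proper divisor of such a corner is a standard monomial, so the corner equals $X_i \cdot t$ for some $t\in\mathcal{O}_\prec(I)$ and some $i$. The reduced Gröbner basis of $I$ with respect to $\prec$ consists of polynomials $h_\ell$ with leading term $b_{j_\ell}$, and by the reducedness condition every non-leading term of $h_\ell$ is a standard monomial; hence $h_\ell = b_{j_\ell} - r_\ell$ for some $\mathbb{C}$-linear combination $r_\ell$ of the $t_i$. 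On the other hand, the border basis element marked by $b_{j_\ell}$ has exactly the same shape $g_{j_\ell} = b_{j_\ell} - \sum_i c_{i,j_\ell}\, t_i$. Both $h_\ell$ and $g_{j_\ell}$ lie in $I$ and differ by a $\mathbb{C}$-linear combination of the $t_i$; since the $t_i$ are linearly independent modulo $I$, this difference vanishes, giving $h_\ell = g_{j_\ell}$.

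The proof is essentially bookkeeping, so I do not expect a serious obstacle. The only point that requires a moment of care is the verification that the minimal generators of $\mathrm{in}_\prec(I)$ genuinely lie inside the border $\partial\mathcal{O}_\prec(I)$ rather than merely in the complement of $\mathcal{O}_\prec(I)$, but this follows directly from the minimality condition as outlined above.
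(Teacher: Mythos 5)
The paper does not give its own proof of this proposition---it is cited directly as Proposition~4.3.6 of~\cite{KKR05}---but it does prove the rational Weyl algebra analogue, \Cref{prop:groebnercorner}, by essentially the argument you give: standard monomials form a basis of the quotient, hence an $\mathcal{O}_\prec$-border basis exists and is unique; each minimal Gröbner basis element marked by a corner has the form $b - \operatorname{NF}_\prec(b)$ with $\operatorname{NF}_\prec(b)$ in the span of the standard monomials, and so coincides with the corresponding border basis element. Your proof is correct and takes the same route; the only cosmetic difference is that the paper invokes its uniqueness theorem (\Cref{thm:uniquenessborder}) to close the identification, whereas you spell out the linear-independence-modulo-$I$ argument directly, and you make explicit the (easy but worth noting) verification that corners lie in $\partial\mathcal{O}_\prec(I)$ rather than merely outside~$\mathcal{O}_\prec(I)$.
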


\section{Border bases in the rational Weyl algebra}\label{sec:borderWeyl}
In this section, we transfer and adapt the theory of border bases to the non-commutative setup of algebras of differential operators. Among other applications, we use them to derive a cyclic $D$-module from given connection matrices.

\subsection{Definition and properties}
We denote by $\mathbb{D}_n$ the multiplicative monoid of monomials in the variables $\partial_1,\ldots,\partial_n$. Instead of $\CC$, we now consider $\CC(x)=\CC(x_1,\ldots,x_n)$ as our field of coefficients. The concepts of order ideals and border prebases from \Cref{sec:borderbases} then generalize straightforwardly to the rational Weyl algebra, and so do border bases. We show that many of the properties of border bases in the commutative case have an analog in the rational Weyl algebra.

\begin{definition}
Let $J\subset R_n$ be an $R_n$-ideal and $\mathcal{O}_\lambda=\{t_1,t_2,\ldots,t_m\}\subset \mathbb{D}_n$ be an order ideal. An $\mathcal{O}_\lambda$-border prebasis $\{g_1,\ldots,g_p\} \subset J$ is a {\em border basis} of $J$ if the residue classes of $t_1,\ldots,t_m$ form a basis of the $\mathbb{C}(x)$-vector space basis underlying the $R_n$-module~$R_n/J$. 
\end{definition}

\begin{theorem}\label{thm:uniquenessborder}
Let $\mathcal{O}_\lambda=\{t_1,t_2,\ldots,t_m\}$ be an order ideal, let $J$ be an ideal of $R_n$ of holonomic rank~$m$, and assume that the residue classes of the elements in $\mathcal{O}_\lambda$ form a  $\mathbb{C}(x)$-vector space basis of~$R_n/J$. Then
\begin{enumerate}
    \item There exists a unique $\mathcal{O}_\lambda$-border basis $G$ of~$J$.
    \item Let $G'$ be an $\mathcal{O}_\lambda$-border prebasis whose elements are in~$J$. Then $G'$ is the $\mathcal{O}_\lambda$-border basis of~$J$.
\end{enumerate}
\end{theorem}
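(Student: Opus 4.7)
The plan is to observe that the statement essentially follows from the definitions together with the assumption that the residue classes of $t_1,\ldots,t_m$ form a $\mathbb{C}(x)$-basis of $R_n/J$. Part 2 in fact implies the uniqueness claim of part 1, so I would prove existence first and then part 2, from which uniqueness is immediate.

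\textbf{Existence (part 1).} Let $\partial\mathcal{O}_\lambda=\{b_1,\ldots,b_p\}$ denote the border of $\mathcal{O}_\lambda$. By hypothesis, the residue classes $\overline{t_1},\ldots,\overline{t_m}$ form a $\mathbb{C}(x)$-basis of $R_n/J$, so for each border element $b_j$ there exist unique coefficients $c_{1,j},\ldots,c_{m,j}\in\mathbb{C}(x)$ such that
\begin{align*}
\overline{b_j} \,=\, \sum_{i=1}^m c_{i,j}\,\overline{t_i} \quad \text{in } R_n/J.
\end{align*}
Setting $g_j\coloneqq b_j-\sum_{i=1}^m c_{i,j}\,t_i$ gives an element of $J$ for every $j=1,\ldots,p$. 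By construction, $G=\{g_1,\ldots,g_p\}$ is an $\mathcal{O}_\lambda$-border prebasis, and since it is contained in $J$ and the residue classes of $\mathcal{O}_\lambda$ form a $\mathbb{C}(x)$-basis of $R_n/J$ by assumption, $G$ is an $\mathcal{O}_\lambda$-border basis of $J$.

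\textbf{Part 2 (and uniqueness in part 1).} Let $G'=\{g_1',\ldots,g_p'\}$ be any $\mathcal{O}_\lambda$-border prebasis whose elements lie in $J$. By definition of a border prebasis, each $g_j'$ has the form $g_j'=b_j-\sum_{i=1}^m c_{i,j}'\,t_i$ for some $c_{i,j}'\in\mathbb{C}(x)$. Passing to residues modulo $J$ gives $\overline{b_j}=\sum_{i=1}^m c_{i,j}'\,\overline{t_i}$. Because $\overline{t_1},\ldots,\overline{t_m}$ form a $\mathbb{C}(x)$-basis of $R_n/J$, the coordinates of $\overline{b_j}$ in this basis are unique, forcing $c_{i,j}'=c_{i,j}$ for all $i,j$. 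Hence $G'=G$. In particular, $G'$ is itself an $\mathcal{O}_\lambda$-border basis, and the border basis constructed in part 1 is the only one.

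\textbf{Main obstacle.} There is no substantive obstacle: the entire content of the theorem is the observation that specifying a border prebasis is the same as specifying the $\mathbb{C}(x)$-coordinates of the classes $\overline{b_j}$ in the basis $\{\overline{t_1},\ldots,\overline{t_m}\}$ of $R_n/J$, and such coordinates are unique. The only point that deserves attention is to make sure one verifies that the assumed basis property of $\mathcal{O}_\lambda$ modulo $J$ is what allows both existence and uniqueness to go through in the non-commutative setting exactly as in the commutative case (cf.\ the analogous argument for \cite[Proposition~4.1.5]{KKR05}), since no further commutativity of formal multiplication matrices is needed here—that characterization of border bases will be the subject of the subsequent \Cref{prop:borderbasescomm_Rn}.
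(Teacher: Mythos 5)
Your proof is correct and takes essentially the same approach as the paper: existence comes from expanding each border residue $\overline{b_j}$ in the basis $\{\overline{t_i}\}$, and uniqueness/part 2 from the uniqueness of coordinates in a basis (the paper phrases this contrapositively as a nontrivial linear relation among the $\overline{t_i}$). The only cosmetic difference is that you derive uniqueness from part 2, while the paper proves both halves of part 1 first and then observes part 2 follows.
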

\noindent This theorem is the rational Weyl algebra version of the first two statements of \cite[Theorem~4.3.4]{KKR05}. 
\begin{proof}
To prove Claim~1, we let $\partial \mathcal{O}_\lambda=\{b_1,\ldots,b_p\}$. For every $i\in\{1,\ldots,p\}$, the residue class of $b_i$ in $R_n/J$ is linearly dependent of the residue classes of the elements of~$\mathcal{O_\lambda}$. Therefore, for each $i$, $J$ contains a polynomial of the form $g_i = b_i -\sum_{j=1}^m \alpha_{ij}t_j$ such that $\alpha_{ij}\in \mathbb{C}(x)$. Then $G=\{g_1,\ldots,g_p\}$ is an $\mathcal{O}_\lambda$-border prebasis, and hence an $\mathcal{O}_\lambda$-border basis of~$J$. Let $G'=\{g_1',g_2',\ldots,g_p'\}$ be another $\mathcal{O}_\lambda$-border basis of $J$, i.e., for $i\in\{1,\ldots,p\}$ we have that $g_i'=b_i - \sum_{j=1}^m\alpha_{ij}'t_j$. If, for contradiction, there is a $b_j$ for which $g_j$ and $g_j'$ differ, then the difference $g_j-g_j'\in J$ encodes a non-trivial linear relation among the equivalence classes of the elements of $\cO_{\lambda}$. This contradicts the hypothesis of the residue classes of the elements of~$\mathcal{O}_\lambda$ being a $\CC(x)$-basis of~$R_n/J$. Thus, Claim~1 is proven.

Claim~2 follows from Claim 1, as $G'$ in Claim~2 is actually an $\mathcal{O}_\lambda$-border basis.
\end{proof}

The generalization of \cite[Proposition~4.3.2]{KKR05} to the non-commutative setup reads as follows, with the same proof.
\begin{proposition}
Any border basis of $J\subset R_n$ in particular generates the ideal~$J$.
\end{proposition}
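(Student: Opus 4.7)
The plan is to combine a dimension comparison with a spanning statement. Let $J' \coloneqq R_n g_1 + \cdots + R_n g_p$ denote the left ideal generated by the border basis $G$. Since each $g_i$ lies in $R_n I$, we have $J' \subseteq R_n I$, and the canonical map induces a surjection $R_n/J' \twoheadrightarrow R_n/R_n I$ of left $\mathbb{C}(x)$-vector spaces. By hypothesis the codomain has $\mathbb{C}(x)$-dimension $m$ with basis $\bar t_1,\ldots,\bar t_m$, so it suffices to show that the images of the same $t_j$ also $\mathbb{C}(x)$-span $R_n/J'$; this would force the surjection to be an isomorphism and hence $J' = R_n I$.

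I would prove the spanning by induction on the index $\ind_\lambda(u)$, showing that every monomial $u \in \mathbb{D}_n$ is congruent modulo $J'$ to a $\mathbb{C}(x)$-linear combination of $t_1,\ldots,t_m$; the claim for a general element of $R_n$ then follows by $\mathbb{C}(x)$-linearity. The case $\ind_\lambda(u)=0$ is immediate, and for $\ind_\lambda(u)=1$ the monomial equals some border element $b_i$ and the relation $g_i = b_i - \sum_j c_{i,j}t_j \in J'$ directly rewrites it as $\sum_j c_{i,j}t_j$ modulo $J'$. For $\ind_\lambda(u)=k\geq 2$, pick a factorization $u = \partial_\ell u'$ with $u' \in \overline{\partial^{k-1}\mathcal{O}_\lambda}$ of strictly smaller index and apply the inductive hypothesis to obtain $u' \equiv \sum_j \alpha_j t_j \pmod{J'}$ with $\alpha_j \in \mathbb{C}(x)$. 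Left-multiplying this congruence by $\partial_\ell$ and using the Leibniz identity $\partial_\ell\,\alpha_j = \alpha_j\,\partial_\ell + \partial_\ell\bullet\alpha_j$ in $R_n$ then yields
\begin{align*}
    u \,\equiv\, \sum_{j=1}^{m} \alpha_j\,(\partial_\ell t_j) \,+\, \sum_{j=1}^{m} (\partial_\ell\bullet\alpha_j)\,t_j \pmod{J'},
\end{align*}
and each $\partial_\ell t_j$ lies in $\mathcal{O}_\lambda\cup\partial\mathcal{O}_\lambda$, so it is either itself one of the $t_k$ or can be rewritten via the appropriate $g_i \in J'$ as a $\mathbb{C}(x)$-combination of the $t_k$.

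The only feature that distinguishes the argument from the commutative version in \cite[Proposition~4.3.2]{KKR05} is the Leibniz correction $(\partial_\ell\bullet\alpha_j)\,t_j$ above; however, this correction is already a $\mathbb{C}(x)$-multiple of $t_j$ and requires no further reduction, so it aids rather than obstructs the induction. I therefore expect no genuine obstacle beyond careful bookkeeping of these Leibniz contributions. With spanning in hand, the surjection $R_n/J' \twoheadrightarrow R_n/R_n I$ between $m$-dimensional $\mathbb{C}(x)$-vector spaces is necessarily an isomorphism, yielding $J' = R_n I$ as claimed.
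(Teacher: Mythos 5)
Your argument is correct and is essentially the adaptation of \cite[Proposition~4.3.2]{KKR05} that the paper invokes by saying ``with the same proof'': one shows, via an induction on the index that is the content of the border division algorithm (\Cref{alg:borderdivWeyl}), that $\mathcal{O}_\lambda$ spans $R_n/\langle g_1,\ldots,g_p\rangle$ over $\CC(x)$, and then compares dimensions with $R_n/R_nI$. The only genuinely non-commutative wrinkle is the Leibniz correction $(\partial_\ell\bullet\alpha_j)\,t_j$, which you correctly observe lands in the $\CC(x)$-span of $\mathcal{O}_\lambda$ and therefore causes no obstruction.
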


\begin{remark}
Given an ideal $J \subset R_n$ of holonomic rank~$m$, not every choice of $\mathcal{O}_\lambda$ allows one to construct an $\mathcal{O}_\lambda$-border basis of~$J$. However, one can find an order ideal $\mathcal{O}_\lambda$ for which we can construct an $\mathcal{O}_\lambda$-border basis, for instance by taking $\mathcal{O}_{\lambda}$ to be the set of standard monomials of a Gröbner basis of~$J$ with respect to a term order~$\prec$ on~$\mathbb{D}_n$. 
\end{remark}

\begin{example}\label{ex:symmnoborder}
    Let $I=\langle \partial_x^2+\partial_y^2-2,\partial_x\partial_y-1\rangle \subset D_2$ which has $\rank(I)=4$. Although the given generators are symmetric in the sense that they are invariant under swapping $x$ and~$y$, no border basis of $J=R_nI$ exists for the order ideal $\mathcal{O}=\{1,\partial_x,\partial_y,\partial_x\partial_y\}$, which is the only symmetric order ideal consisting of four elements.
\end{example}

Border bases can also help to compute reduced Gröbner bases. For that, let $\prec$ be a term order on $\mathbb{D}_n=\{\partial^\alpha\}_{\alpha \in \NN^n}$. Let $P=\sum_{\alpha}r_{\alpha}\partial^{\alpha}\in R_n$, with $r_{\alpha}\in \CC(x).$
The {\em initial term} of $P$, denoted $\init_{\prec}(P)$, is the term $\partial^{\alpha}$ which is the $\prec$-largest monomial occurring in $P$ with non-zero coefficient. For an $R_n$-ideal~$J$, the set $\mathbb{D}_n\setminus \{\init_{\prec}(P) | P\in J\}\subset \NN^n$ is an order ideal which we denote by~$\mathcal{O}_{\prec}(J)$. The {\em corners} of $\cO_{\prec}(J)$ are defined just as in the commutative case, see \Cref{def:corners}. 
\begin{proposition}\label{prop:groebnercorner}
Let $\prec$ be a term order on~$R_n$ and $J$ an $R_n$-ideal. Then there exists a
unique $\mathcal{O}_{\prec}(J)$-border basis $G$ of $J$, and the reduced $\prec$-Gröbner basis of $J$ is the
subset of $G$ consisting of the polynomials that are marked by the corners of~$\mathcal{O}_{\prec}(J)$. 
\end{proposition}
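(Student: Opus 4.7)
My plan is to mirror the strategy of the commutative analog \cite[Proposition~4.3.6]{KKR05}, using \Cref{thm:uniquenessborder} as the Weyl-algebra replacement for the corresponding uniqueness result. The proof splits naturally into two parts: (i) existence and uniqueness of the $\cO_{\prec}(J)$-border basis $G$ of $J$, and (ii) the identification of the subset of $G$ marked by corners of $\cO_{\prec}(J)$ with the reduced $\prec$-Gröbner basis.

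For part (i), I would first observe that $\cO_{\prec}(J) = \mathbb{D}_n \setminus \{\init_{\prec}(P) \mid P \in J\}$ is an order ideal, since its complement $\init_{\prec}(J)$ is a monomial ideal in $\mathbb{D}_n$, and such ideals are closed under divisibility. By standard Gröbner basis theory in the rational Weyl algebra \cite[Chapter~1]{SST00}, the division algorithm shows that the residue classes of the elements of $\cO_{\prec}(J)$ form a $\CC(x)$-basis of $R_n/J$. The existence and uniqueness of the $\cO_{\prec}(J)$-border basis $G$ then follow directly from Claim~1 of \Cref{thm:uniquenessborder}.

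For part (ii), I first verify the combinatorial fact that every corner of $\cO_{\prec}(J)$ lies in its border $\partial \cO_{\prec}(J)$: if $\partial^{\alpha}$ is a minimal generator of $\mathbb{D}_n \setminus \cO_{\prec}(J)$, then for any $i$ with $\alpha_i \geq 1$ the proper divisor $\partial^{\alpha - e_i}$ lies in $\cO_{\prec}(J)$, whence $\partial^{\alpha} = \partial_i \cdot \partial^{\alpha - e_i} \in \partial_i \cO_{\prec}(J) \setminus \cO_{\prec}(J) \subset \partial \cO_{\prec}(J)$. Next, I recall that the reduced $\prec$-Gröbner basis of $J$ is characterized as the unique set of polynomials, monic in their initial terms, whose initial terms are precisely the minimal generators of $\init_{\prec}(J)$ (i.e., the corners of $\cO_{\prec}(J)$) and whose non-initial terms all lie in $\cO_{\prec}(J)$. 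Each such element $h$ therefore has the shape $h = \partial^{\alpha} - \sum_{t \in \cO_{\prec}(J)} c_t \, t$ with $c_t \in \CC(x)$, i.e., the shape of a border-prebasis element marked by the corner $\partial^{\alpha}$.

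Finally, I match the two families: for each corner $\partial^{\alpha}$ let $g_{\alpha} \in G$ denote the border-basis element marked by $\partial^{\alpha}$, and let $h_{\alpha}$ denote the reduced Gröbner basis element with $\init_{\prec}(h_{\alpha}) = \partial^{\alpha}$. Both $g_{\alpha}$ and $h_{\alpha}$ lie in $J$ and are marked by the same border element, so their difference is a $\CC(x)$-linear combination of elements of $\cO_{\prec}(J)$ lying in $J$; by the linear independence of residue classes established in part (i), this forces $g_{\alpha} = h_{\alpha}$. Since the map $\partial^{\alpha} \mapsto h_{\alpha}$ is a bijection between corners and reduced Gröbner basis elements, the subset of $G$ marked by the corners coincides with the reduced $\prec$-Gröbner basis. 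I anticipate no real obstacle; the main subtlety is simply ensuring that the classical Gröbner basis machinery in $R_n$ yields reduced bases of the described form, which is a standard consequence of the division algorithm in the rational Weyl algebra.
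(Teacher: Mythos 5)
Your proof is correct and follows essentially the same route as the paper: establish that the standard monomials $\cO_{\prec}(J)$ form a $\CC(x)$-basis of $R_n/J$ by standard Gröbner arguments, invoke \Cref{thm:uniquenessborder} for existence and uniqueness of the border basis, and then match each corner-marked border basis element with the corresponding reduced Gröbner basis element of the same leading term via the uniqueness of coefficients in the $\cO_{\prec}(J)$-span. You spell out a few details the paper leaves implicit (that corners lie in the border, and the final bijectivity check), but the argument is the same.
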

\begin{proof}
By standard arguments from Gröbner bases, see~\cite[p.~33]{SST00}, the residue classes of the elements of $\cO_{\prec}(J)$ are a $\CC(x)$-basis of $R_n/J$. Then, by \Cref{thm:uniquenessborder}, there exists a unique $\cO_{\prec}(J)$-border basis of~$J$. Now let $b \in \mathbb{D}_n\setminus \cO_{\prec}(J)$ be a corner. The element of the minimal $\prec$-Gröbner basis of $J$ with leading term $b$ has the form $b - \operatorname{NF}_{\prec,J}(b)$, with the normal form $\operatorname{NF}_{\prec,J}(b)$ of $b$ being contained in the $\CC(x)$-span of $\mathcal{O}_{\prec}(J)$. Since the $\cO_{\prec}(J)$-border basis of $J$ is unique, this Gröbner basis element coincides with the border basis element marked by~$b$. The reducedness of the resulting Gröbner basis follows from minimality of the corners and the particular shape of the border basis elements.
\end{proof}

The border division algorithm adapted to the rational Weyl algebra reads as follows.
Let~$f\in R_n$ and $\{g_1,\ldots,g_p\}\subset R_n$ be an $\cO_\lambda$-border prebasis of an $R_n$-ideal~$J.$ 
\begin{algorithm}[Border division in the rational Weyl algebra]\label{alg:borderdivWeyl}
\begin{enumerate}[B1]
\item[]
    \item Set $f_1=\cdots=f_p=0$, $c_1=\cdots=c_m=0$ and $h=f$.
    \item If $h=0$, return $(f_1,\ldots,f_p,c_1,\ldots,c_m)$ and stop.
    \item If $\ind_{\lambda}(h)=0$, there exist $c_1,\ldots,c_m \in \CC(x_1,\ldots,x_n)$ such that $h=c_1t_1+\cdots+c_mt_m$. Find such~$c_j$'s, return $(f_1,\ldots,f_p,c_1,\ldots,c_m)$ and stop.
    \item If $\ind_{\lambda}(h)>0$, let $h=a_1h_1+\cdots+a_sh_s$ such that $a_i\in \CC(x) \backslash \{0\}$ and $h_i\in \mathbb{D}_m$ satisfy $\ind_\lambda(h_1)=\ind_\lambda(h)$ and such that $h_1$ is of maximal total degree in the $\partial_i$'s among those.  Find the minimal $i$ such that $h_1=t'b_i$ with $t'\in \mathbb{D}_n$ of degree $\ind_\lambda(h)-1$. Subtract $a_1t'g_i$ from~$h$, add $a_1t'$ to $f_i$, and continue with step~B2.
\end{enumerate}
\end{algorithm}
The algorithm returns a tuple $(f_1,\ldots,f_p,c_1,\ldots,c_m)$
such that
\begin{align}\label{eq:fdivWeyl}
f \,=\, f_1g_1 + \cdots + f_pg_p + c_1t_1 + \cdots + c_mt_m
\end{align}
and $\deg(f_i) \leq \ind_\lambda(f) - 1$ for all $i$ with $f_ig_i\neq0$.
\Cref{alg:borderdivWeyl} is guaranteed to terminate, since the index of $h$ is still guaranteed to decrease after a finite number of iterations. In comparison to \Cref{alg:borderdivision}, due to the Leibniz rule, extra iterations of the fourth step might be required to decrease the index of~$h$. The representation in~\eqref{eq:fdivWeyl} is independent of the choice of~$h_1$ in step B4---due to the Leibniz rule, the index might not decrease after a single iteration, but the degree does. Hence, the reduction of different terms with the same index and total degree do not interfere with~one~another.

\medskip

Let $\cO=\{t_1,\ldots,t_m\}$ be an order ideal and $J\subset R_n$ of holonomic rank $m$ such that  $([t_1],\ldots,[t_m])$ is a $\CC(x)$-basis of~$R_n/J$. The entries of the multiplication matrices~$M_{\partial_i}$ again can be computed by applying the border division algorithm to the $\partial_it_j$'s with respect to the $\cO$-border  basis of $J$. The connection matrices of $J$ as in~\eqref{eq:connmatrices} differ from them by a transpose, i.e., $A_i=M_{\partial_i}^\top$. We give some more details on that in the next remark.

\begin{remark}\label{rem:connmulttranspose}
The left multiplication by $\partial_i$ is only a $\CC$-linear endomorphism on $R_n/J$. It is not $\CC(x)$-linear---instead, it needs to be extended using the Leibniz rule. Hence, the fact that $\partial_i$ and $\partial_j$ commute as differential operators 
does not imply that the multiplication matrices commute. Instead, they fulfill the integrability conditions. Since the connection matrices $A_i$ as in \eqref{eq:connmatrices} differ by a transpose from the matrices $M_{\partial_i}$ as in \eqref{eq:formalmult} obtained from a border basis, the integrability conditions for the multiplication matrices read as 
\begin{align}\label{eq:integrmult}
[M_{\partial_i},M_{\partial_j}] \,=\, \partial_j\bullet M_i-\partial_i\bullet M_j, \quad i,j=1,\ldots,n,
\end{align}
in contrast to $[A_i,A_j]=\partial_i\bullet A_j-\partial_j\bullet A_i$ from \eqref{eq:integrabilityConditions}.
\end{remark}

In the next section, we investigate the formal multiplication matrices in the case of border prebases and explain in which cases they fulfill the integrability conditions.

\subsection{Characterization in terms of integrability}
In the commutative case, \Cref{thm:borderbasescomm} states that a border prebasis is a border basis if and only if the formal multiplication matrices commute. In the rational Weyl algebra, this instead reads as follows.

\begin{theorem}\label{prop:borderbasescomm_Rn}
Let $J\subset R_n$ be an $R_n$-ideal and $\mathcal{O}_\lambda=\{t_1,t_2,\ldots,t_m\}\subset \mathbb{D}_n$ be an order ideal.
An $\cO_{\lambda}$-border prebasis $\{g_1,\ldots,g_p\}\subset J$ of $J$ is a border basis of $J$ if and only if the 
formal multiplication matrices $M_{\partial_i}$, $i=1,\ldots,n$, that are read from the $\cO_{\lambda}$-border prebasis, fulfill the integrability conditions~\eqref{eq:integrmult}. 
\end{theorem}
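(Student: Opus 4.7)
The plan is to mirror the proof of the commutative analog, \Cref{thm:borderbasescomm}, replacing the scalar commutation of the multiplication matrices by an integrability relation arising from the Leibniz rule. The reason integrability enters is that left multiplication by $\partial_i$ on $R_n/R_nI$ is only $\CC$-linear, not $\CC(x)$-linear.

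For the direction $(\Rightarrow)$, suppose $G = \{g_1, \ldots, g_p\}$ is a border basis of $R_nI$, so that $[t_1], \ldots, [t_m]$ form a $\CC(x)$-basis of $R_n/R_nI$. By the definition~\eqref{eq:formalmult} of the formal multiplication matrices and the fact that each $g_s = b_s - \sum_j c_{j,s}\, t_j$ lies in $R_nI$, left multiplication by $\partial_i$ on $R_n/R_nI$ is represented in this basis precisely by $M_{\partial_i}$. The identity $\partial_i\partial_j = \partial_j\partial_i$ in $R_n$ then gives $\partial_i\partial_j [t_k] = \partial_j\partial_i [t_k]$; expanding both sides in coordinates and using the Leibniz rule to commute $\partial_i$ past the rational-function entries of $M_{\partial_j}$ produces $\partial_i\bullet M_{\partial_j} + M_{\partial_i}M_{\partial_j}$ on the left and $\partial_j \bullet M_{\partial_i} + M_{\partial_j}M_{\partial_i}$ on the right. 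Equating these column by column yields~\eqref{eq:integrmult}, as essentially worked out already in \Cref{rem:connmulttranspose}.

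For the converse $(\Leftarrow)$, I would build a formal candidate for the quotient. Let $V \coloneqq \CC(x)^m$ with standard basis $e_1, \ldots, e_m$ and set
\begin{align*}
\partial_i \cdot \Bigl(\sum_k c_k \, e_k\Bigr) \,\coloneqq\, \sum_k (\partial_i \bullet c_k) \, e_k + \sum_k c_k \, (M_{\partial_i} e_k), \quad c_k \in \CC(x).
\end{align*}
A direct computation of $[\partial_i, \partial_j]$ on a basis vector $e_k$, tracking the Leibniz contributions carefully, shows that this extends to a well-defined left $R_n$-module structure on $V$ if and only if the $M_{\partial_i}$ fulfill~\eqref{eq:integrmult}; the remaining Weyl-algebra relations involving the $x_i$ are automatic. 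Since any non-empty order ideal contains the origin, relabel so that $t_1 = 1$, and introduce the $R_n$-linear map $\varphi \colon R_n \to V$, $P \mapsto P \cdot e_1$. An induction on total degree along $\cO_\lambda$ (using that $\cO_\lambda$ is down-closed, so every $t_k = \partial^\alpha$ can be reached from $1$ through a chain of single-variable differentiations that stays inside $\cO_\lambda$) yields $\varphi(t_k) = e_k$ for all $t_k \in \cO_\lambda$; in particular $\varphi$ is surjective. Writing each border term $b_s$ as $\partial_i t_k$ with $t_k \in \cO_\lambda$, the definition~\eqref{eq:formalmult} gives $\varphi(b_s) = \sum_j c_{j,s}\, e_j$, so $\varphi(g_s) = 0$. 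Hence $R_n \langle g_1, \ldots, g_p\rangle \subseteq \ker \varphi$, and $\varphi$ descends to a surjection $\bar\varphi \colon R_n/R_nI \twoheadrightarrow V$. On the other hand, \Cref{alg:borderdivWeyl} shows that $[t_1], \ldots, [t_m]$ span $R_n/R_nI$ over $\CC(x)$, so $\dim_{\CC(x)}(R_n/R_nI) \leq m = \dim_{\CC(x)} V$. Surjectivity of $\bar\varphi$ forces equality of dimensions and turns $\bar\varphi$ into an isomorphism that sends $[t_k]$ to $e_k$; therefore $([t_1], \ldots, [t_m])$ is a $\CC(x)$-basis of $R_n/R_nI$ and $G$ is a border basis.

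The main obstacle is the well-definedness of the $R_n$-action on $V$: one must carefully isolate the Leibniz contributions when computing $[\partial_i, \partial_j]$ on a general element $\sum c_k e_k$ to see that the vanishing of this commutator amounts exactly to~\eqref{eq:integrmult}, rather than to a naive matrix commutation $[M_{\partial_i}, M_{\partial_j}] = 0$ as in the commutative case.
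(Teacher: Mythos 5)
Your proof is correct and follows essentially the same route as the paper: after reading off integrability in the forward direction from $M_{\partial_i}=A_i^\top$, the converse builds a rank-$m$ left $R_n$-module $V$ (the paper's $V_m$, spanned by the $t_i$) on which the integrability conditions guarantee a well-defined $\partial_i$-action, shows it is cyclic by the induction along $\cO_\lambda$, checks that the $g_s$ lie in the kernel of the evaluation map, and deduces $R_n/R_nI\cong V$. The only cosmetic difference is that you close with a dimension count on the surjection $\bar\varphi\colon R_n/R_nI\twoheadrightarrow V$, while the paper instead compares the generating set $\{[t_i]\}$ of $R_n/R_nI$ (via the division algorithm) against the linearly independent set $\{t_i+K\}$ in $R_n/K$; both arguments are equivalent and both implicitly use, as the paper does, that the border prebasis generates $R_nI$ in order to descend from $\ker\varphi\supseteq R_n\langle g_1,\ldots,g_p\rangle$ to $R_n/R_nI$.
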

Before proving the theorem, we would like to highlight its consequence for a $D_n$-ideal~$I$: Just from the integrability conditions for the formal multiplication matrices being fulfilled, one can conclude that a border prebasis is in fact a border basis of $R_nI$ and hence that $I$ has holonomic rank~$m$.

\begin{proof}
If $\{g_1,\ldots,g_p\}$ is an $\cO_\lambda$-border basis of~$R_nI$, then $[t_1],\ldots,[t_m]\in R_n/J$ are a \mbox{$\CC(x)$-basis} of $R_n/J$ by definition. Since $M_{\partial_i}=A_i^\top$, see~\Cref{rem:connmulttranspose}, they satisfy the integrability conditions.

For the reverse direction, let $V_m$ denote the $\CC(x)$-vector space spanned by $t_1,\ldots,t_m$. We can assume $t_1=1$. It is a left $R_n$-module with action given by:
\begin{align}
\label{eq:RnModule_def}
\begin{split}
\varphi\colon \,  R_n \times V_m &\rightarrow V_m\, ,  \\\
 \big(\partial_k , \sum_{j=1}^{m} c_j t_j \big)&\mapsto \sum_{j=1}^{m}  (\partial_k\bullet c_j) t_j + \sum_{j=1}^{m}\sum_{l=1}^{m}c_j (A_{k})_{jl} t_l  \, ,
\end{split}
\end{align}
where $c_j \in \mathbb{C}(x)$ and $A_i$ here denotes $M_{\partial_i}^\top$. The action $\varphi$ extends by the following relations for $P,Q \in R_n$, $v\in V$, and~$\lambda \in \CC(x)$:
\begin{align}
\begin{split}
\varphi(P+Q,v)& \,=\, \varphi(P,v)+\varphi(Q,v), \\
\varphi(PQ,v) & \,=\, \varphi(P,\varphi(Q,v)) ,\\
\varphi(\lambda \, P,v)& \,=\, \lambda \cdot \varphi(P,v)\, .
\end{split}
\end{align}
Then for $i,j=1\ldots,n$, the following two properties hold:
\begin{subequations}
\begin{align}
\label{eq:RnModule_first}
([\partial_i ,\partial_j],  \sum_{k=1}^m c_k t_k) & \,\mapsto\,  0 , \\
\label{eq:RnModule_second}
([\partial_i ,x_j]
,\sum_{k=1}^m  c_k t_k
) & \,\mapsto \, \delta_{ij }\sum_{k=1}^m  c_k t_k  \, .
\end{align}
\end{subequations}
Property \eqref{eq:RnModule_first} follows form the assumed integrability condition of the formal multiplication matrices, and Property \eqref{eq:RnModule_second} follows from~\eqref{eq:RnModule_def} by the Leibniz rule. Thus, we have a well-defined left $R_n$-action on~$V_m$.

We are going to prove that $V_m$ is cyclic as a left $R_n$-module. We proceed by induction on the degree of the elements of~$V_m$. We start with the base case $t_1=1$. We have that:
\begin{align*}
\varphi(t_1 , t_1) = 1 \, .
\end{align*}
For the induction step, let $t_i = \partial_j t_k$. Then, we have
\begin{align*}
\varphi(t_i , t_1) = \varphi(\partial_j t_k , t_1) = \varphi(\partial_j,\varphi ( t_k , t_1))=\varphi (\partial_j , \sum_{l=1}^m(A_k)_{1l} t_l)=\varphi(\partial_j , t_k) \nonumber = \sum_{l=1}^m (A_j)_{kl} t_l = t_i\, , \,
\end{align*}
where we have used twice the definition of the formal multiplication matrices. We have thus obtained a surjective left $R_n$-linear map 
\begin{align*}
\widetilde{\Theta}\colon \, R_n \longrightarrow V_m \,  , \quad P\mapsto \varphi(P,t_1).
\end{align*}
We also have an induced isomorphism of left $R_n$-modules 
\begin{align}
\Theta \colon \, R_n /K \longrightarrow V _m \, ,
\end{align}
with $K = \ker \,  \widetilde{\Theta}$. In particular, the residue classes $t_1+K$, $t_2+K$, $\ldots$, $t_m+K$ are linearly independent over $\mathbb{C}(x)$. We then show that $J\subseteq K$. Let $b_j = \partial_k t_l$ be one of the elements of the border, with $g_j = b_j - \sum_{i=1}^{m} \alpha_{ij} t_i$. We then have on $V_m$:
\begin{align}
\begin{split}
\varphi(g_j , t_1) & \,=\,  \varphi(b_j,  t_1) - \sum_{i=1}^m \alpha_{ij} \varphi( t_i , t_1) \,=\, \varphi(\partial_k , t_l) - \sum_{i=1}^m \alpha_{ij} t_i  \\
& \,=\, \sum_{i=1}^m(A_k)_{li} t_i - \sum_{i=1}^m \alpha_{ij }t_i  \,=\, 0 \, .
\end{split}
\end{align}
The last equality follows from the definition of the formal multiplication matrices~\eqref{eq:formalmult}. Therefore, we have that $g_j \in \ker \widetilde{\Theta}$ for $j=1,\ldots,p$, so that $J \subseteq K$.
We thus have a surjective homomorphism 
\begin{align*}
\Psi \colon \, R_n / J \longrightarrow R_n/K \, , \quad [P]_{J} \mapsto [P]_K,
\end{align*}
and since the set $\{t_1+J,\ldots,t_m+J\}$ generates the $\mathbb{C}(x)$-vector space underlying $R_n/J$, and since the set $\{t_1+K,t_2+K,\ldots,t_m+K\}$ is $\mathbb{C}(x)$-linearly independent, both sets must be bases and $J=K$. This shows that $R_n/J$ and $V_m$ are isomorphic as left $R_n$-modules and that $G$ is an $\mathcal{O}$-border basis of~$J$.
\end{proof}

\subsection{From connection matrices to cyclic \texorpdfstring{$D$}{D}-modules}\label{sec:connbord}
From given connection matrices, one can construct a border basis of an associated $R_n$-ideal. Border bases hence enable us to conveniently compute a $D$-ideal that represents an integrable connection as a cyclic $D$-module. We now explain that procedure. 

Let connection matrices $A_1,\ldots,A_n\in \CC(x)^{m\times m}$ be given, together with a choice of basis $(f_1,\ldots,f_m)$ of the $\CC(x)$-vector space $\CC(x)^m$. A key assumption in the following construction is that one of $\{f_1,f_2,\ldots,f_m\}$ is a cyclic vector $v$ of the $R_n$-module $(\CC(x)^m,\nabla=\d -A\wedge \, )$.
Without loss of generality, $v=f_1$. In practice, it might require some effort to construct a cyclic vector from a given basis, see \Cref{ex:connPfaff}. In \Cref{sec:sunrise}, we  provide some heuristics how to identify such an element in applications to Feynman integrals, relying on certain contractions of edges of the underlying Feynman diagram. There, cyclic generators will correspond to maximally uncontracted Feynman diagrams, referred to as ``top sector'' or ``top topology'' in that setup, see \cite[Chapter 6]{WeinzierlFeynmanIntegralsBook}.

We next aim to find an order ideal $\cO$ of the  form $\{ \partial^{I_1},\ldots,\partial^{I_m} \} $ with $I_1,\ldots,I_m\subset \NN^n$, which consists of monomials in the $\partial_i$'s only, such that there exists an invertible matrix $g\in \CC(x)^{m\times m}$ for which the following identity holds:
\begin{align}\label{ex:gauge_cyclic}
    \left(\cO \bullet f_1\right)^\top 
\,=\,
g\cdot \begin{pmatrix}
    f_1 \\
    f_2\\
    \vdots
    \\
    f_m
\end{pmatrix} .
\end{align}
Such an $\cO$ then yields a $\CC(x)$-basis. Note also that the entries of $g$ can be computed from the connection matrices $A_1,A_2,\ldots,A_n$.
We denote the gauge transformed matrices by $\widetilde{A}_i$. Each element $b_i \in \partial \cO$ in the border of~$\cO$ can be written as a multiple of one of the basis elements $\partial^{I_k}$ by some~$\partial_j$. The difference of $b_i$ by the $k$-th entry of $\widetilde{A}_j\cdot (\partial^{I_1},\ldots,\partial^{I_m})^\top$ yields an operator~$P_i$, which is marked by~$b_i$. The integrability of the connection matrices guarantees that $P_i$ is independent of the choice of~$j$, in case there is an ambiguity. We denote by $J_{\cO}$ the $R_n$-ideal generated by the operators $P_i$,  $i=1,\ldots,| \partial \cO|$, arising from the border of $\cO$.

\begin{proposition}
The operators $P_i$, $i=1,\ldots,|\partial \cO|$, as described above, are an $\cO$-border basis of the $R_n$-ideal $J_{\cO}$.
In particular, $J_{\cO}$ has holonomic rank~$m$. 
\end{proposition}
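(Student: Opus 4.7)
The plan is to apply \Cref{prop:borderbasescomm_Rn}. By construction, each $P_i$ has the form $b_i - \sum_{l=1}^{m} c_{i,l}\, t_l$ with $c_{i,l}\in \CC(x)$, so the collection $\{P_i\}$ is an $\cO$-border prebasis, and it is the generating set of $J_\cO$. Hence it suffices to verify that the formal multiplication matrices $M_{\partial_j}$ extracted from this prebasis satisfy the integrability conditions \eqref{eq:integrmult}; from this the border-basis conclusion (and the equality $\rank(J_\cO)=|\cO|=m$) follows immediately.

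The first step is to identify $M_{\partial_j}$ with $\widetilde{A}_j^{\top}$ column by column. If $\partial_j t_k = t_r\in \cO$, then \eqref{eq:formalmult} gives $(M_{\partial_j})_{\ast k}=e_r$; on the other hand, since the basis $\cO$ consists of monomial operators and $\widetilde{A}_j\cdot(t_1,\ldots,t_m)^{\top}$ is arranged to represent the action of $\partial_j$ on this basis, $\CC(x)$-linear independence of the $t_l$ forces row $k$ of $\widetilde{A}_j$ to equal $e_r^{\top}$, so column $k$ of $\widetilde{A}_j^{\top}$ is also $e_r$. If instead $\partial_j t_k = b_i\in \partial\cO$, then by the very definition of $P_i$ the $k$-th column of $M_{\partial_j}$ is the vector $\bigl((\widetilde{A}_j)_{k,1},\ldots,(\widetilde{A}_j)_{k,m}\bigr)^{\top}$, which is again column $k$ of $\widetilde{A}_j^{\top}$.

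The second step is to transfer integrability. The gauge transformation \eqref{eq:gauge_trans} preserves the integrability conditions \eqref{eq:integrabilityConditions}: viewing the connection as $\d -A$ and conjugating by $g\in \GL_m(\CC(x))$, the curvature transforms covariantly, so vanishing is preserved. Hence the $\widetilde{A}_j$ also satisfy \eqref{eq:integrabilityConditions}, and taking transposes as in \Cref{rem:connmulttranspose} yields precisely \eqref{eq:integrmult} for the matrices $M_{\partial_j}=\widetilde{A}_j^{\top}$. Invoking \Cref{prop:borderbasescomm_Rn} then shows that $\{P_i\}$ is an $\cO$-border basis of $J_\cO$; in particular, the residue classes of $t_1,\ldots,t_m$ form a $\CC(x)$-basis of $R_n/J_\cO$, so $\rank(J_\cO)=m$.

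The main obstacle I anticipate is the identification $M_{\partial_j}=\widetilde{A}_j^{\top}$ in the case $\partial_j t_k\in\cO$: it uses in an essential way that $\cO$ is a set of genuine monomial differential operators $\partial^{I_k}$ and that the gauge change was performed so that $\widetilde{A}_j$ encodes, row by row, the expansion of $\partial_j t_k$ in the basis $\cO$, thus forcing rows associated with $\partial_j t_k\in\cO$ to be standard unit vectors. The auxiliary point of well-definedness of $P_i$ in the presence of several factorisations $b_i=\partial_j\cdot t_k$ is itself a consequence of the integrability of the $\widetilde{A}_j$ and is already invoked in the construction preceding the proposition.
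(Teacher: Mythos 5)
Your proof is correct and follows essentially the same route as the paper: identify the formal multiplication matrices from the prebasis with the gauge-transformed connection matrices (up to transpose), note that a $\CC(x)$-basis change preserves the integrability conditions, and invoke \Cref{prop:borderbasescomm_Rn}. The paper's own proof is terser—it asserts in one line that the matrices fulfill integrability because they arise from a $\CC(x)$-basis change of an integrable connection—whereas you spell out the column-by-column identification $M_{\partial_j}=\widetilde{A}_j^{\top}$ and the covariance of curvature under gauge transformations, which is useful added detail.
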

\begin{proof}
The ideal $J_{\cO}$ is generated by a border prebasis. From this border prebasis, we can read off the formal multiplication matrices. 
Because it is obtained from a $\mathbb{C}(x)$-basis change from an integrable connection, they fulfill the integrability conditions, and hence the border prebasis of $J_{\cO}$ is a border basis, via \Cref{prop:borderbasescomm_Rn}. Hence, the holonomic rank is the one expected, namely $\rank (J_{\cO}) =m$.
\end{proof}

\begin{proposition}\label{prop:sameRideal}
Let $J_1,J_2$ be $R_n$-ideals of holonomic rank $m$ for which there exist $P_1,\ldots,P_m\in R_n$ such that both
\begin{enumerate}[(i)]
    \item the equivalence classes of $P_1,
    \ldots,P_m$ are $\CC(x)$-bases of both $R_n/J_1$ and $R_n/J_2$ and
    \item the connection matrices of $J_1$ and $J_2$ in the basis from (i) coincide. 
\end{enumerate}
Then $J_1=J_2$ as $R_n$-ideals.
\end{proposition}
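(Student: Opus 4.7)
The idea is to construct an $R_n$-linear isomorphism $\Phi \colon R_n/J_1 \to R_n/J_2$ that sends $[1]_{J_1}$ to $[1]_{J_2}$. Since both quotients are cyclic $R_n$-modules generated by the class of~$1$, such a $\Phi$ forces the kernels of the surjections $R_n \twoheadrightarrow R_n/J_k$ to coincide, that is, $J_1 = J_2$. The map $\Phi$ is defined using condition~(i): since $([P_1]_{J_k},\ldots,[P_m]_{J_k})$ is a $\CC(x)$-basis of $R_n/J_k$ for $k=1,2$, there is a unique $\CC(x)$-linear map sending $[P_i]_{J_1} \mapsto [P_i]_{J_2}$, and this is by construction a $\CC(x)$-linear isomorphism.

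The substantive step is to promote $\Phi$ to an $R_n$-linear map, i.e., to check $\Phi(\partial_j \cdot v) = \partial_j \cdot \Phi(v)$ for every $v \in R_n/J_1$ and every $j$. For $v = \sum_i c_i [P_i]_{J_1}$ with $c_i \in \CC(x)$, the Leibniz rule in $R_n$ yields
\[ \partial_j \cdot v \;=\; \sum_i (\partial_j \bullet c_i)\,[P_i]_{J_1} \;+\; \sum_i c_i \, \bigl(\partial_j \cdot [P_i]_{J_1}\bigr), \]
and the second sum is expanded via the connection matrices of $J_1$ in the basis $(P_i)$. The same computation carried out in $R_n/J_2$ uses the connection matrices of $J_2$ in that same basis. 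Since by hypothesis~(ii) the two sets of connection matrices coincide, applying $\Phi$ to the above expansion produces exactly $\partial_j \cdot \Phi(v)$. This is essentially the same Leibniz-plus-matrix calculation as the one verifying the well-definedness of the $R_n$-action $\varphi$ in the proof of \Cref{prop:borderbasescomm_Rn}.

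Finally, I would invoke the standing convention from \Cref{sec:background} that the basis underlying the connection matrices starts with $s_1 = 1$, so that $P_1 = 1$. Then $\Phi([1]_{J_1}) = \Phi([P_1]_{J_1}) = [P_1]_{J_2} = [1]_{J_2}$, and $R_n$-linearity yields $\Phi([Q]_{J_1}) = Q \cdot \Phi([1]_{J_1}) = [Q]_{J_2}$ for every $Q \in R_n$. Since $\Phi$ is injective, $Q \in J_1 \Longleftrightarrow Q \in J_2$, so $J_1 = J_2$.

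The only real obstacle is the $R_n$-linearity verification, because the $\partial_j$-action on the quotients is merely $\CC$-linear rather than $\CC(x)$-linear, so $\CC(x)$-linearly extending $\Phi$ from the basis does not tautologically produce an $R_n$-map. It is precisely the equality of connection matrices that guarantees the nontrivial part of $\partial_j \cdot [P_i]$—beyond what the Leibniz rule already fixes on $\CC(x)$-coefficients—is the same in both modules.
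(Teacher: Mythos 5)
Your proposal is correct and follows essentially the same route as the paper: both constructions exhibit an $R_n$-linear identification of $R_n/J_1$ with $R_n/J_2$ matching $[1]_{J_1}$ to $[1]_{J_2}$, with hypothesis (ii) supplying the crucial compatibility of the $\partial_j$-actions. The only cosmetic difference is that you derive $J_1=J_2$ from injectivity of a single map, whereas the paper runs the well-definedness argument in both directions to obtain the two inclusions separately.
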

\begin{proof}
Consider the $R_n$-linear morphism given by $R_n / J_1\longrightarrow R_n / J_2,$  $ [1]_{J_1}\mapsto [1]_{J_2}$ using the connection matrices. Due to $(i)$ and $(ii)$, this map is well-defined, and so is $R_n/J_2\longrightarrow R_n/J_1,$  $ [1]_{J_2}\mapsto [1]_{J_1}$, so that we obtain both $J_1 \subseteq J_2$ and $J_2 \subseteq J_1$.
\end{proof}

\begin{lemma}\label{prop:invgauge}
Let $\cO_1$, $|\cO_1|=m$, be an order ideal and a system of integrable matrices in this basis of $\CC(x)^m$ be given. Denote by $J_{1}$ the associated $R_n$-ideal as described above. Let $\cO_2=g\cO_1$ be another order ideal with the change of basis being encoded by $g\in \operatorname{GL}_m(\CC(x))$. Denote by $J_2$ the $R_n$-ideal associated to the gauge transformed matrices obtained from those for $\cO_1$ via a gauge transformation with respect to~$g$. Then $J_{1}=J_{2}$.
\end{lemma}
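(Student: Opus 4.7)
The plan is to reduce to Proposition~\ref{prop:sameRideal} using the elements of $\cO_2$ as a common $\CC(x)$-basis of $R_n/J_1$ and $R_n/J_2$ in which both ideals share the same connection matrices. Write $\cO_1=\{t_1,\ldots,t_m\}$ and $\cO_2=\{s_1,\ldots,s_m\}$, so that $(s_1,\ldots,s_m)^\top = g\cdot (t_1,\ldots,t_m)^\top$ with $g\in \GL_m(\CC(x))$. By construction, $R_n/J_2$ has the residue classes $[s_1],\ldots,[s_m]$ as a $\CC(x)$-basis in which the connection matrices are the gauge-transformed matrices~$\widetilde{A}_i$.

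It remains to verify that $\{[s_1],\ldots,[s_m]\}$ is also a $\CC(x)$-basis of $R_n/J_1$ and that the corresponding connection matrices of $J_1$ are again~$\widetilde{A}_i$. The basis property is immediate, since $[t_1],\ldots,[t_m]$ form a basis of $R_n/J_1$ by construction and $g$ is invertible over $\CC(x)$. For the connection matrices, I would use the explicit description~\eqref{eq:RnModule_def} of the $R_n$-action on $R_n/J_1$: expanding $s_p=\sum_q g_{pq}t_q$ and computing $\partial_i s_p$ via the Leibniz rule produces the term $\sum_q (\partial_i\bullet g_{pq})\,t_q$, together with $\sum_{q,r}g_{pq}(A_i)_{qr}\,t_r$ coming from the $A_i$-action on the $t_q$'s; re-expressing the $t_q$'s in the basis $\cO_2$ via~$g^{-1}$ then reassembles the entries of $gA_ig^{-1}+(\partial_i\bullet g)g^{-1}$, which is precisely~$\widetilde{A}_i$ by~\eqref{eq:gauge_trans}.

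Once this is established, both hypotheses of Proposition~\ref{prop:sameRideal} hold with $P_i=s_i$, yielding $J_1=J_2$. The main obstacle will be the bookkeeping of the change of basis, together with the transpose convention between the formal multiplication matrices $M_{\partial_i}$ and the connection matrices $A_i$ pointed out in Remark~\ref{rem:connmulttranspose}; once conventions are fixed, the computation essentially reproduces the derivation of the gauge transformation formula~\eqref{eq:gauge_trans} inside the module~$R_n/J_1$.
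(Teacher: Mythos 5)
Your proposal is correct and takes the same route as the paper's own (one-line) proof, which simply observes that the lemma is a direct consequence of Proposition~\ref{prop:sameRideal}. What you add is the explicit verification that the two hypotheses of that proposition hold with $P_i = s_i$: the residue classes of $\cO_2 = \{s_1,\ldots,s_m\}$ are a $\CC(x)$-basis of $R_n/J_1$ (since $g$ is invertible and $\cO_1$ is a basis by construction), and the connection matrices of $J_1$ in that basis reproduce the gauge-transformed $\widetilde A_i$ via the standard Leibniz computation reconstructing~\eqref{eq:gauge_trans}. That is precisely the bookkeeping the paper is implicitly relying on when it declares the consequence "direct," so this fills in the detail rather than departing from the argument.
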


The lemma is a direct consequence of \Cref{prop:sameRideal}.
We showcase the statement of the lemma with the running example of \cite{ConnMatM2}, which is Example~2.1 therein.
\begin{example}
 Let $I=\langle x\partial_x^2-y\partial_y^2+\partial_x-\partial_y,x\partial_x+\partial_y+1 \rangle\subset D_2$. This $D_2$-ideal has holonomic rank $2$ and the connection matrices for the choices of bases $\cO_1=\{1,\partial_x\}$ and $\cO_2=\{ 1,\partial_y\}$ are provided in \cite[Section~3.1]{ConnMatM2}. 
Via the border of $\cO_1$, $\partial \cO_1=\{\partial_x^2,\partial_x\partial_y,\partial_y\}$, we associate the $R_2$-ideal 
\begin{align*}
    J_{\cO_1} \,= \, \left\langle \partial_x^2+\frac{3x-y}{x(x-y)}\partial_x+\frac{1}{x(x-y)},\, \partial_x\partial_y+\frac{x+y}{y(y-x)}\partial_x+\frac{1}{y(y-x)},\, \partial_y+ \frac{x}{y}\partial_x + \frac{1}{y}\right\rangle .
\end{align*}  
Via the border of~$\cO_2$, $\partial \cO_2=\{\partial_y^2,\partial_x\partial_y,\partial_x\}$, we associate the $R_2$-ideal 
\begin{align*}
    J_{\cO_2} \,= \, \left\langle \partial_y^2-\frac{3y-x}{y(y-x)}\partial_y+\frac{1}{y(y-x)},\, \partial_x\partial_y+\frac{x+y}{x(x-y)}\partial_y+\frac{1}{x(x-y)},\, \partial_x+ \frac{y}{x}\partial_y + \frac{1}{x}\right\rangle .
\end{align*}
The listed generators are border bases of the resulting $R_2$-ideals for $\cO_1$ and $\cO_2$, respectively. One can check that  $J_{\cO_1}$ and $J_{\cO_2}$ have the same Gröbner basis and hence indeed coincide. 
\end{example}

We now comment on the construction of $J_\cO$ above.
\begin{remark}\label{rem:iterate_all_cOs}
    By \Cref{prop:groebnercorner}, an $R$-ideal $J$ admits an $\cO$-border basis for at least some order ideal~$\cO$. Thus, in the above construction, one can iterate over all possible order ideals $\cO$ of $m$ elements in $n$ variables---the counting of which is given by multidimensional partitions~\cite{bhatia1997asymptotic}---until one finds one order ideal for which the matrix $g$ has non-zero determinant. 
    Heuristically, we found that  order ideals $\cO$ containing derivatives of  the smallest possible orders are preferable for computation. I.e., they give rise to more easily invertible matrices $g$  and to generators of the ideal $J_{\cO}$ with simpler terms. 
\end{remark}
We will give more examples of this construction in Section \ref{sec:explappl}.

\subsection{Classification of certain \texorpdfstring{$D$}{D}-ideals}\label{sec:classFrobconst}
While the classification of general $D$-ideals of fixed holonomic rank is highly intricate, some special cases can be made explicit.
The Hilbert scheme of $m$ points can be interpreted to provide some parts of the moduli space of $D$-ideals of holonomic rank~$m$, namely the cases of PDEs with constant coefficients as well as that of Frobenius ideals. To be precise, our considerations apply to  ideals $R_nI$ in the rational Weyl algebra, as discussed already earlier. Since, however, they are the natural starting point, we here start from ideals $I\subset D_n$. 
\subsubsection{Constant coefficients}\label{sec:constcoeff}
Let $\mathfrak{I}\subset \CC[\partial_1,\ldots,\partial_n]\subset D_n$ encode a system of PDEs with constant coefficients, 
so that we are in the commutative setup of 
\Cref{sec:Hilb}. We tackle the  study of $D$-ideals $I=D_n\mathfrak{I}$ with $\mathfrak{I}\subset \CC[\partial]$ of a fixed holonomic rank $m$ using the Hilbert scheme of points~$\Hilb_n^m$. For this to work out, we need that the holonomic rank of $I$ and $\dim_{\CC} (\CC[\partial]/\mathfrak{I}) $ coincide.

\begin{lemma}\label{prop:rankconstant}
  Let $\mathfrak{I}\subset \CC[\partial]$ and denote $I=D_n\mathfrak{I}$. Then $\dim_{\CC}(\CC[\partial]/\mathfrak{I})=\rank(I)$.
\end{lemma}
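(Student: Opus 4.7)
The plan is to exploit that $\mathfrak{I}$ is generated by polynomials in the $\partial_i$'s alone, so the non-commutativity of $R_n$ never actually interferes with the quotient. First I would note that $R_n I = R_n \mathfrak{I}$ (since $I = D_n\mathfrak{I}$ and $D_n\subset R_n$), so the claim reduces to computing $\dim_{\CC(x)}(R_n/R_n\mathfrak{I})$.

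Next, view $R_n$ as a free left $\CC(x)$-module with basis $\{\partial^{\alpha}\mid \alpha\in\NN^n\}$; equivalently, $R_n \cong \CC(x)\otimes_{\CC}\CC[\partial_1,\ldots,\partial_n]$ as a $\CC(x)$-module. Under this identification my central claim is
$$R_n\mathfrak{I} \,=\, \CC(x)\otimes_{\CC}\mathfrak{I}$$
as $\CC(x)$-subspaces of $R_n$. The containment $\supseteq$ is clear, since $\mathfrak{I}\subset R_n$ and the right-hand side is just its $\CC(x)$-linear span. For $\subseteq$, an arbitrary element of $R_n\mathfrak{I}$ is a finite sum of terms of the form $r(x)\partial^{\alpha}\cdot g$ with $r\in\CC(x)$, $\alpha\in\NN^n$, and $g\in\mathfrak{I}$. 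Because $\mathfrak{I}$ is an ideal of the commutative ring $\CC[\partial]$, the product $\partial^{\alpha}g$ already lies in $\mathfrak{I}\subset\CC[\partial]$, and multiplying on the left by the scalar $r(x)\in\CC(x)$ then lands in $\CC(x)\otimes_{\CC}\mathfrak{I}$. The crucial point is that no Leibniz expansion needs to be performed: the $x$-coefficient sits on the far left throughout, and the $\partial^\alpha$ only acts inside the commutative subring $\CC[\partial]$.

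Passing to the quotient then gives an isomorphism of $\CC(x)$-vector spaces
$$R_n/R_n\mathfrak{I} \,\cong\, \CC(x)\otimes_{\CC}\bigl(\CC[\partial]/\mathfrak{I}\bigr),$$
and comparing $\CC(x)$-dimensions on the left with $\CC$-dimensions on the right yields $\rank(I)=\dim_{\CC}(\CC[\partial]/\mathfrak{I})$, as desired. The only nontrivial step is the tensor decomposition $R_n\mathfrak{I}=\CC(x)\otimes_{\CC}\mathfrak{I}$, and I expect that to be the main (mild) obstacle: one must verify it does not shrink after quotienting—i.e., that the Weyl commutator $[\partial_i,x_j]=\delta_{ij}$ cannot produce any additional $\CC(x)$-linear relations among the monomials $\partial^\alpha$ modulo $R_n\mathfrak{I}$. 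The argument above rules this out precisely because all generators of $\mathfrak{I}$ are $x$-free.
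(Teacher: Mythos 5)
Your proof is correct and follows essentially the same route as the paper's: both hinge on the identity $R_n\mathfrak{I}=\CC(x)\otimes_{\CC}\mathfrak{I}$, verified via the normally ordered form of differential operators, and then conclude by base change from $\CC$ to $\CC(x)$. You spell out the inclusion $R_n\mathfrak{I}\subseteq\CC(x)\otimes_{\CC}\mathfrak{I}$ a bit more explicitly than the paper does, but the underlying argument is the same.
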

\begin{proof}
We switch to the category of $\CC$-vector spaces and start from the short exact sequence of $\CC$-vector spaces $0\to \mathfrak{I} \to \CC[\partial]\to \CC[\partial]/\mathfrak{I} \to 0$. Tensorizing with $\CC(x)$ from the left yields the short exact sequence of $\CC(x)$-vector spaces $0\to R_nI \to R_n \to R_n/R_nI\to 0$. To see that, one exploits that $\CC(x)\otimes_{\CC} \mathfrak{I}=R_n\mathfrak{I}$ as $\CC(x)$-vector spaces, which becomes visible from normally ordered form of differential operators. In short, for constant coefficients, passing from the $\CC$-vector space underlying $\CC[\partial]/\mathfrak{I}$ to the $\CC(x)$-vector space underlying $R_n/R_nI$ is just a field extension by $\CC(x)$ and hence the  dimension of $R_n/R_nI$ over ${\CC(x)}$ equals the dimension of $\CC[\partial]/\mathfrak{I}$ over~$\CC$.
\end{proof}

In particular, any $\CC$-basis of $\CC[\partial]/\mathfrak{I}$ is a $\CC(x)$-basis of~$R_n/R_nI$. In these bases, the connection matrices have constant entries, and hence are pairwise commuting. We can therefore use the theory of Hilbert schemes of points for the classification of such ideals. 

\subsubsection{Frobenius ideals}\label{sec:Frob}
We now move on to Frobenius ideals. Denote by $\theta_i=x_i\partial_i$ the $i$-th Euler operator. A \mbox{$D_n$-ideal} $I$ is a {\em Frobenius ideal} if $I=D_n\mathfrak{I}$ for some ideal $\mathfrak{I}\subset \CC[\theta_1,\ldots,\theta_n].$ The polynomial ring $\CC[\theta_1,\ldots,\theta_n]$ is a commutative subring of the Weyl algebra, and we can again consider Hilbert schemes of points, with our variables now being the $\theta_i$'s.
The ideal $\mathfrak{I}$ is called {\em Artinian} if  $\CC[\theta]/\mathfrak{I}$ is finite-dimensional as a $\CC$-vector space.

\begin{proposition}[{\cite[Proposition 2.3.6]{SST00}}]
A Frobenius ideal $I=D_n\mathfrak{I}$ is holonomic if and only if the underlying $\CC[\theta]$-ideal $\mathfrak{I}$ is Artinian. In this case, $\rank(I)=\dim_{\CC}(\CC[\theta]/\mathfrak{I}).$ 
\end{proposition}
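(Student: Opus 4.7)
The plan is to imitate the proof of \Cref{prop:rankconstant}, establishing the rank identity $\rank(I)=\dim_{\CC}(\CC[\theta]/\mathfrak{I})$ by exhibiting $R_n/R_nI$ as a $\CC(x)$-scalar extension of $\CC[\theta]/\mathfrak{I}$. From this formula, the rank-finiteness side of the biconditional is automatic; the main obstacle will be upgrading finite rank to genuine holonomicity in the direction $\mathfrak{I}$ Artinian $\Rightarrow I$ holonomic.

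The first step is to observe that $\{\theta^{\mathbf{a}}\}_{\mathbf{a}\in \NN^n}$ is a left $\CC(x)$-basis of~$R_n$. By iterated use of the Leibniz rule one has $\theta_i^{k}=x_i^{k}\partial_i^{k}+(\text{lower order in }\partial_i)$ (the explicit expansion involving Stirling numbers of the second kind), so the change-of-basis matrix from $\{\partial^{\mathbf{a}}\}$ to $\{\theta^{\mathbf{a}}\}$ is triangular with nonzero diagonal entries $x^{\mathbf{a}}\in \CC(x)^{\times}$, hence invertible over~$\CC(x)$. In particular, $R_n\cong \CC(x)\otimes_{\CC}\CC[\theta]$ as left $\CC(x)$-modules. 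Using this $\theta$-normal form, for $P\in R_n$ and $Q\in \mathfrak{I}$ I expand $PQ=\sum_{\mathbf{a}}f_{\mathbf{a}}(x)\,(\theta^{\mathbf{a}}Q)$; since $\mathfrak{I}$ is a left $\CC[\theta]$-ideal, each $\theta^{\mathbf{a}}Q$ again lies in $\mathfrak{I}$, giving $R_nI=R_n\mathfrak{I}=\CC(x)\cdot\mathfrak{I}\cong\CC(x)\otimes_{\CC}\mathfrak{I}$ as $\CC(x)$-subspaces of~$R_n$. Tensoring the short exact sequence $0\to \mathfrak{I}\to \CC[\theta]\to \CC[\theta]/\mathfrak{I}\to 0$ with the flat $\CC$-module $\CC(x)$ then yields $0\to R_nI\to R_n\to R_n/R_nI\to 0$ and the identity $\rank(I)=\dim_{\CC(x)}(R_n/R_nI)=\dim_{\CC}(\CC[\theta]/\mathfrak{I})$.

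This identity immediately gives ``$\mathfrak{I}$ Artinian $\Leftrightarrow \rank(I)<\infty$'', and ``$I$ holonomic $\Rightarrow\rank(I)<\infty$'' is Cauchy--Kovalevskaya--Kashiwara. The hard part is the converse ``$\mathfrak{I}$ Artinian $\Rightarrow I$ holonomic'', for which finite rank alone is a priori insufficient. To handle it, I would compute the characteristic variety directly: since $\mathfrak{I}$ is Artinian it contains a power $(\theta_1,\ldots,\theta_n)^N$ for some $N$, and in particular $\theta_i^N\in\mathfrak{I}$ for each $i$. The principal symbol of $\theta_i^N$ in the order filtration on $D_n$ is $(x_i\xi_i)^N$, so $\Char(I)\subseteq V(x_1\xi_1,\dots,x_n\xi_n)\subset T^*\CC^n$. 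The right-hand side is a union of $2^n$ linear coordinate subspaces, each of dimension~$n$. Combined with Bernstein's inequality $\dim\Char(I)\geq n$, this forces $\dim\Char(I)=n$, so $I$ is holonomic.
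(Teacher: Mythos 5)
The paper itself does not prove this proposition---it is imported from \cite[Proposition~2.3.6]{SST00}---so there is no in-paper argument to compare against; I evaluate your proof on its own. Your rank computation is correct and does successfully mirror the paper's proof of \Cref{prop:rankconstant}: the change of basis from $\{\partial^{\mathbf{a}}\}$ to $\{\theta^{\mathbf{a}}\}$ is triangular over $\CC(x)$ with invertible diagonal entries $x^{\mathbf{a}}$, so $R_n\cong\CC(x)\otimes_{\CC}\CC[\theta]$ as left $\CC(x)$-modules; from there $R_nI=R_n\mathfrak{I}=\CC(x)\cdot\mathfrak{I}\cong\CC(x)\otimes_{\CC}\mathfrak{I}$, and flatness of $\CC(x)$ over $\CC$ applied to $0\to\mathfrak{I}\to\CC[\theta]\to\CC[\theta]/\mathfrak{I}\to 0$ yields the rank formula exactly as you say.

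There is, however, a genuine error in the holonomicity step. You assert that an Artinian ideal $\mathfrak{I}\subset\CC[\theta]$ contains a power of the maximal ideal $(\theta_1,\ldots,\theta_n)$, so that $\theta_i^N\in\mathfrak{I}$. That holds only when $V(\mathfrak{I})=\{0\}$; Artinian merely means $V(\mathfrak{I})$ is finite. For instance $\mathfrak{I}=(\theta_1-1)\subset\CC[\theta_1]$ is Artinian but contains no power of $\theta_1$. The fix is local: since $\CC[\theta]/\mathfrak{I}$ is a finite-dimensional $\CC$-vector space, multiplication by $\theta_i$ on it satisfies a monic polynomial, so $\mathfrak{I}$ contains some monic $p_i(\theta_i)$ of degree $d_i\geq 1$. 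Because $\theta_i^k=x_i^k\partial_i^k+(\text{lower order in }\partial_i)$, the operator $p_i(\theta_i)$ has order $d_i$ in the order filtration with principal symbol $(x_i\xi_i)^{d_i}$, so one still gets $\Char(I)\subseteq V(x_1\xi_1,\ldots,x_n\xi_n)$, which is a union of $n$-dimensional coordinate subspaces, and Bernstein's inequality closes the argument as you intended. With that one correction your proof is complete, and it is an independent argument (via characteristic varieties) for a claim the paper only cites.
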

Any $\CC$-basis of $\CC[\theta]/\mathfrak{I}$ is a $\CC(x)$-basis of~$R_n/R_nI$. The basis elements are now polynomials in the $\theta$'s.
In any such basis, the connection matrices of $I=D_n\mathfrak{I}$ as in~\eqref{eq:connmatrices} are $A_i=\frac{1}{x_i}M_{\theta_i}^\top$, where $M_{\theta_i}$ is the matrix that describes the multiplication $\theta_i\colon \CC[\theta]/\mathfrak{I}\longrightarrow \CC[\theta]/\mathfrak{I}$. By construction, these matrices are pairwise commuting, since the $M_{\theta_i}$'s have entries in~$\CC$.

\begin{example}
    Let $P=\theta^2-\theta+1\in \CC[\theta]$. In the $\CC$-basis $(1,\theta)$ of $\CC[\theta]/\langle P \rangle$, the multiplication by $\theta$ is described by the matrix $M_{\theta}=\left(\begin{smallmatrix} 0 & -1 \\ 1 & 1\end{smallmatrix}\right)$. The $D_1$-ideal generated by $P$ has holonomic rank $2$. As a $\CC(x)$-basis of $R_1/R_1P$, we choose $(1,x\partial)$ so that for any $f\in \Sol(P)$, we have
\begin{align*}
    \begin{pmatrix}
        f \\ xf'
    \end{pmatrix}' \,=\, \begin{pmatrix}
        0 & \frac{1}{x}\\ -\frac{1}{x} & \frac{1}{x}
    \end{pmatrix}\cdot   \begin{pmatrix}
        f \\ xf'
    \end{pmatrix}
\end{align*}
by the Leibniz rule. Indeed, the connection matrix equals $\frac{1}{x}M_{\theta}^\top$.
\end{example}

\medskip
As explained above, in the case of Frobenius ideals~$I$, there always exists a $\mathbb{C}(x)$-basis of $R_n/R_nI$ for which the matrices $A_i$ are pairwise commuting, i.e., 
\begin{align*}
    \left[A_i,A_j\right] \,=\, 0 \quad \text{for all } \, i\neq j \, .
\end{align*}
For each tuple $i<j$, this yields $m^2$ quadratic equations in the entries of the connection matrices. 
Together with the choice of an order ideal, connection matrices of Frobenius ideals can hence be regarded as a subset of the variety of $n$-tuples of $m\times m$ matrices with entries in~$\CC$ that are pairwise commuting. The latter is a subvariety of the affine space~$\mathbb{M}_{m\times m}^n(\CC)$ of $n$-tuples of $m\times m$ matrices.
Over algebraically closed fields, such as the complex numbers or the field of Puiseux series in $\varepsilon$, 
\begin{align}\label{eq:puisex}
    \CC\{\!\{\varepsilon\}\!\} \,=\, \bigcup_{N>0}\CC(\!(\varepsilon^{1/N})\!) \, ,
\end{align}
varieties of commuting matrices are investigated in~\cite{VarCommMat} 
and they are inherently linked to Quot schemes via the ADHM construction~\cite{ADHM} named after Atiyah, Drinfel’d, Hitchin, and Manin, see also~\cite{QuotInt}. 

\section{Applications in physics}\label{sec:explappl}
In this section, we visit linear differential equations behind integrals in string theory, particle physics, and cosmology. We provide our Mathematica notebooks at \url{https://uva-hva.gitlab.host/universeplus/BorderBasesRationalWeylAlgebra}. In our computations, we use the {\tt HolonomicFunctions} package~\cite{HolFun}. 
Because Feynman integrals $\mathcal{I}(x)$ are holonomic functions in the kinematic variables~$x_i$, they satisfy a holonomic system of PDEs, and hence in particular ordinary linear differential equations of finite order $r_i\in \mathbb{N}_{>0}$ in each of the $\partial_i$'s, see~\cite[Proposition 2.10]{GLS21}. We denote the corresponding differential operator of order $r_i$ by $P_i\in \CC[x_1,\ldots,x_n]\langle \partial_i\rangle$. Whenever the corresponding $r_i$ is minimal, the operator is called a {\em Picard--Fuchs operator of order $r_i$}~\cite{PicardFuchsEquations}, and one can read off this operator from the Pfaffian system associated to a Feynman integral {\cite[Section~7.1.3]{WeinzierlFeynmanIntegralsBook}}. Note that the order of the Picard--Fuchs operator is bounded above by the number of master integrals.

\subsection{A string integral}
We revisit the string integral from \Cref{ex:stringy} in two variables. We first perform a change from $F=(F_1,F_2,F_3)^\top$ to the vector $\widetilde{F}=(F_1,\partial_1\bullet F_1,\partial_2\bullet F_1)^\top$ of 
first-order partial derivatives of~$F_1$ so that the associated order ideal is going to be $\mathcal{O}_1\coloneqq \{ 1,\partial_{1},\partial_2\}$. From the first rows of the connection matrices $A_1,A_2$ as in~\eqref{eq:A3A4stringy}, one reads that $\widetilde{F}=g_1\cdot F$~for
\begin{align}
    g_1 \,=\, \begin{pmatrix}
        1 & 0 & 0\\
        \frac{s_{12}+s_{23}}{x_1} & - \frac{s_{12}}{x_1} & 0\\
        \frac{s_{24}}{x_2} & \frac{s_{12}}{x_2} & - \frac{s_{12}}{x_2}
    \end{pmatrix} \, \in \,  \mathbb{M}_{3\times 3}\left(\CC(s_{12},s_{23},s_{24})(x_1,x_2)\right) .
\end{align}
Since $s_{12}\neq 0$ in $\CC(s_{12},s_{23},s_{24})$, the determinant of $g_1$ is non-zero. In applications, when plugging in fixed values for $s$, one would hence need to choose non-zero~$s_{12}$. Hence, $\widetilde{F}$ is a basis and we can use $g_1$ to carry out a gauge transformation of the connection matrices.  We denote the resulting matrices by $\widetilde{A}_1$ and $\widetilde{A}_2$. To read a border basis for the order ideal $\mathcal{O}_1=\{ 1,\partial_{1},\partial_{2}\}$, we need to look at the border elements $\partial_1^2, \partial_1\partial_2,\partial_2^2$.
From the second row of $\widetilde{A}_1$ and the second and third row of~$\widetilde{A}_2$, we read the three differential operators 
\begin{align}\begin{split}\label{eq:P1P2P3stringy}
    P_1^{(1)} & \,=\, \partial_1^2 \,+\, \frac{s_{23}(s_{12} + s_{23} + s_{24} + s_{25})}{x_1(x_1-1)} \,+\, \frac{s_{23} x_2(x_2-1)}{x_1(x_1-1) (x_1 - x_2)} \partial_2 \\ & \quad  - \frac{1}{x_1( x_1-1) (x_1 - 
   x_2)} \big((-1 + s_{12} + 2 s_{23} + s_{24} + s_{25}) x_1^2 + (-1 + s_{12} + s_{23}) x_2 \\ & \quad - 
  x_1 (-1 + s_{12} + s_{23} + s_{24} - x_2 + s_{12} x_2 + 2 s_{23} x_2 + s_{25} x_2)\big) \partial_1  , \\ 
  P_2^{(1)} & \,=\, \partial_1\partial_2 \,+\, \frac{s_{24}}{x_1 - x_2}\partial_1 \, -\, \frac{s_{23}}{x_1 - x_2}\partial_2 , \\ 
  P_3^{(1)} &\,=\, \partial_2^2 \,+\, \frac{s_{24} (s_{12} + s_{23} + s_{24} + s_{25})}{( x_2-1) x_2}  \,-\, \frac{
 s_{24} (x_1-1) x_1}{x_2(x_2-1)(x_1 - x_2)} \partial_1 \\ &  \quad - 
\left(s_{24} \left( \frac{1}{x_2-1} + \frac{1}{x_2}\right) + \frac{s_{25}}{x_2-1} - \frac{1}{x_2} + \frac{s_{12}}{x_2} - 
    \frac{s_{23}}{x_1 - x_2} \right)\partial_2.
\end{split}\end{align}
The operators $P_1^{(1)},P_2^{(1)},P_3^{(1)}$ are an $\mathcal{O}_1$-border basis of the $R_2$-ideal $J_{\cO_1}$ generated by them, where $R_2(s)=R_2\otimes \CC(s)$ is the rational Weyl algebra in the variables~$x_1,x_2$ with parameters~$s$. We approved computationally that, for generic $s_{ij}$'s, $P_1^{(1)},P_2^{(1)},P_3^{(1)}$ are a Gröbner basis w.r.t.\ both the degree revlex and degree lex built on $\partial_1\succ \partial_2$, in coherence with \Cref{prop:groebnercorner} and using that the first border coincides with the corners in this case. 

\medskip

In the flavor of Picard--Fuchs operators in the direction of~$x_1$, we could also have chosen the order ideal $\mathcal{O}_2=\{ 1,\partial_1,\partial_1^2\}$ instead. The respective gauge matrix then is 
\begin{align*}
    g_2 \,=\, 
    \begin{pmatrix} 
1 & \frac{s_{12}+s_{23}}{x_1} &  \frac{{s_{12}}^2+{s_{12}} \left(2 {s_{23}}+\frac{{s_{24}}
   x_1}{x_1-x_2}+\frac{{s_{25}} x_1}{x_1-1}-1\right)+({s_{23}}-1)
   {s_{23}}}{x_1^2} \\
0 & -\frac{s_{12}}{x_1} & \frac{{s_{12}} \left(\frac{-x_1
   ({s_{12}}+{s_{23}}+{s_{25}})+{s_{12}}+{s_{23}}+x_1-1}{x_1-1}+\frac{x_1
   ({s_{23}}+{s_{24}})}{x_2-x_1}\right)}{x_1^2} \\
0 & 0 & \frac{{s_{12}} {s_{23}}
   (x_2-1)}{(x_1-1) x_1 (x_1-x_2)}
 \end{pmatrix}^\top .
\end{align*}

One now has $\partial \cO_2=\{ \partial_2,\partial_1\partial_2,\partial_1^2\partial_2,\partial_1^3\}$ and the four resulting differential operators in the $\cO_2$-border basis are
\begingroup
\allowdisplaybreaks
\begin{align*}
    P^{(2)}_1 &\,=\, \partial_2 +\frac{(s_{12}+s_{23}+s_{24}+s_{25})(x_1-x_2)}{x_2(x_2-1)}+\frac{x_1(x_1-1)(x_1-x_2)}{s_{23} x_2(x_2-1)}\partial_1^2  \\
    &\, \phantom{=}+ \frac{1}{{s_{23}} {x_2}({x_2}-1) } 
    \bigg[ {x_2}-{x_1}^2 ({s_{12}}+2 {s_{23}}+{s_{24}}+{s_{25}}-1)  \\
    &\, \phantom{=+\bigg[}{+x_1} ({x_2} ({s_{12}}+2 \
{s_{23}}+{s_{25}}-1)+{s_{12}}+{s_{23}}+{s_{24}}-1)-{x_2} \
({s_{12}}+{s_{23}}) \bigg] \partial_1, \\
    P^{(2)}_2 &\,=\, \partial_1\partial_2 +\frac{{s_{23}} ({s_{12}}+{s_{23}}+{s_{24}}+{s_{25}})}{({x_2}-1) {x_2}}+\frac{{x_1}({x_1}-1) }{{x_2}({x_2}-1) } \partial_1^2  \\
    &\,\phantom{=} -\frac{{s_{12}} ({x_1}-1)+{s_{23}} (2 {x_1}-1)+{s_{24}} \
{x_1}+{s_{24}} {x_2}-{s_{24}}+{s_{25}} {x_1}-{x_1}+1}{({x_2}-1) {x_2}} \partial_1 ,\\
    P^{(2)}_3 &\,=\, \partial_1^2\partial_2 +\frac{({s_{23}}-1) {s_{23}} \
({s_{12}}+{s_{23}}+{s_{24}}+{s_{25}})}{({x_2}-1) {x_2} ({x_1}-{x_2})}  \\
&\,\phantom{=} -\frac{({s_{23}}-1) ({s_{12}} ({x_1}-1)+{s_{23}} (2 \
{x_1}-1)+{s_{24}} {x_1}+{s_{24}} {x_2}-{s_{24}}+{s_{25}} \
{x_1}-{x_1}+1)}{({x_2}-1) {x_2} ({x_1}-{x_2})} \partial_1\\
&\, \phantom{=}+\frac{({s_{23}}-1) {x_1}^2-({s_{23}}-1) {x_1}+{s_{24}} ({x_2}-1) {x_2}}{({x_2}-1) {x_2} ({x_1}-{x_2})} \partial_1^2\
, \\
P^{(2)}_4 &\,=\, \partial_1^3 -\frac{({s_{23}}-1) {s_{23}} \
({s_{12}}+{s_{23}}+{s_{24}}+{s_{25}})}{({x_1}-1) {x_1} ({x_1}-{x_2})} \\
&\,\phantom{=}+\frac{s_{23}-1}{x_1(x_1-1)(x_1-x_2)}\bigg[2 {s_{24}} {s_{12}} (2 {x_1}-{x_2}-1)+2 {s_{24}} {x_1}-{s_{24}}+2 {s_{25}} \
{x_1}-2 {x_1}+1 \\
&\, \phantom{=+\bigg[}{+s_{23}} (3 {x_1}-{x_2}-1)-{s_{25}} {x_2}+{x_2} \bigg] \partial_1  \\
&\,\phantom{=}+\bigg[ \frac{{s_{23}}+{s_{24}}-1}{{x_2}-{x_1}} +\frac{-{x_1} ({s_{12}}+2 \
{s_{23}}+{s_{25}}-3)+{s_{12}}+{s_{23}}-2}{({x_1}-1) {x_1}} \bigg] \partial_1^2 .
\end{align*}
\endgroup
Here, $P^{(2)}_4$ is a Picard--Fuchs operator of order 3 in the $x_1$-direction.
We computed in Mathematica that, for generic  $s_{ij}$'s, a Gröbner basis of the $R_2(s)$-ideal generated by the ``corners'' $P^{(2)}_1$ and $P^{(2)}_4$ with respect to the degree revlex order built on $\partial_1 \succ \partial_2$ is given by the operators $P_1^{(1)},P_2^{(1)},P_3^{(1)}$ in~\eqref{eq:P1P2P3stringy}. Moreover, $\langle P^{(2)}_1,P^{(2)}_2,P^{(2)}_3,P^{(2)}_4\rangle = \langle P^{(2)}_1,P^{(2)}_4\rangle$. Taking a Gröbner basis with respect to lex for $\partial_1 \succ \partial_2$ spits out two operators (with standard monomials $\{1,\partial_2,\partial_2^2\}$), one of them being a Picard--Fuchs operator of order $3$ in the $x_2$-direction.  We also note that $P^{(2)}_1,P^{(2)}_4$ are a reduced Gröbner basis for the lexicographic order based on $\partial_2\succ \partial_1$ and that $P_1^{(1)},P_2^{(1)},P_3^{(1)}$ are also a Gröbner basis for degree revlex for~$\partial_2 \succ \partial_1$.

\subsection{Sunrise Feynman integral}\label{sec:sunrise}
We here consider the unequal mass sunrise Feynman integral. 
It belongs to the family of so-called banana integrals, and corresponds to a graph with $\ell=2$ independent cycles and $\ell+2=4$ variables, $x_i=m_i^2$, for $i=1,2,\ldots,\ell+1$, and $x_0=s$. In the physics context, $s$ measures the rest mass of the external particle, and $m_i$ is the mass of the $i$-th massive internal leg, see e.g.\ the Introduction of \cite{Elvang_Huang_2015}. We are moreover setting the reference mass scale to 1, i.e., $\mu^2=1$. We start from the system of PDEs in matrix form as in \cite[Eq.~(4.10)]{Maggio:2025jel}, but keep the variable~$x_0$:
\begin{align}
\label{eq:SunriseRModule}
\partial_i \bullet \mathcal{I} \,=\, A_i(d;x_0,x_1,x_2,x_3) \cdot \mathcal{I}\, , \quad i = 0,1,2,3,
\end{align}
where $d$ is the dimension of Minkowski spacetime and $\mathcal{I}$ is a vector of $7$ master integrals, which we denote as $\mathcal{I} =
\left(\mathcal{I}_1 ,
\mathcal{I}_2,
\ldots,
\mathcal{I}_7 \right)^\top$. 
The connection matrices $A_i(d;x_0,x_1,x_2,x_3) \in \mathbb{M}_{7\times 7}(\mathbb{C}(d)(x_0,x_1,x_2,x_3))$ can be found in the ancillary file {\tt sunrise.nb} of \cite{Maggio:2025jel} for $i=1,2,3$, and were kindly provided to us by Yoann Sohnle including that for the variable~$x_0$, which he computed using the Mathematica package {\tt LiteRed}~\cite{LiteRed}.  \Cref{eq:SunriseRModule} describes the structure of an $R_4(d)$-module.
We remark that the first three integrals, $\cI_1,\cI_2,\cI_3$, are called \textit{tadpole} integrals in the physics literature, and cannot correspond to the cyclic generators of this $R_4(d)$-module. The master integral $\cI_4$ corresponds to the sunrise Feynman integral, and will turn out to give rise to a cyclic generator of this $R_4(d)$-module.
  
{Having tried several candidates for possible order ideals, we found $\cO=\{ 1,\partial_1,\partial_2,\partial_3,\partial_1\partial_3,\partial_2\partial_3,\partial_3^2\}$ to work nicely. One can check that
\begin{align}\label{ex:gaugesunrise}
\begin{pmatrix}
    \cI_4 \\
    \partial_1\bullet \cI_4\\
    \partial_2 \bullet \cI_4\\
    \partial_3 \bullet \cI_4\\
    \partial_1\partial_3 \bullet \cI_4\\
    \partial_2\partial_3 \bullet \cI_4\\
    \partial_3^2 \bullet \cI_4
\end{pmatrix}
\,=\,
g_{\text{sunrise}}\cdot \begin{pmatrix}
    \cI_1 \\
    \cI_2\\
    \cI_3\\
    \cI_4\\
    \cI_5\\
    \cI_6\\
    \cI_7
\end{pmatrix} ,
\end{align}
where $g_{\textrm{sunrise}}\in \mathbb{M}_{7\times 7}(\mathbb{C}(d)(x_0,x_1,x_2,x_3))$ is an invertible matrix. Therefore, this order ideal gives rise to a basis of the corresponding quotient module. 
We now use $g_{\textrm{sunrise}}$ to perform a gauge transformation on the connection matrices $A_0,\ldots,A_3$ to the basis $\cO \bullet \cI_4$.
This results in connection matrices from which we can directly read off a border basis of an annihilating $R_4(d)$-ideal of~$\cI_4$.}
The border of $\cO$ consists of $16$ elements, namely
$ \partial \cO = \{ \partial_0,\partial_0\partial_1,\partial_0\partial_2,\partial_0\partial_3,\partial_0\partial_1\partial_3,\partial_0\partial_2\partial_3,\partial_0\partial_3^2,\partial_1^2,\partial_1\partial_2,\partial_1\partial_2\partial_3,\partial_1^2\partial_3,\partial_1\partial_3^2,\partial_2^2,\partial_2^2\partial_3,\partial_2\partial_3^2,\partial_3^3\},$  
to which we associate $16$ differential operators. We denote the resulting $R_4(d)$-ideal by~$J_{\cO}$ and compare it to the annihilating $D$-ideal of the associated Feynman integrals of generic mass banana integrals in dimensional regularization as provided in the recent article~\cite{Didealbanana}. 
The respective Weyl algebra is $D=D_4(d)=\CC(d)[x_0,x_1,x_2,x_3]\langle \partial_0,\partial_1,\partial_2,\partial_3\rangle$, and the operators from \cite[Theorem 1]{Didealbanana} read~as
\begingroup
\allowdisplaybreaks
\begin{align}\begin{split}\label{eq:Didealbanana}
P_E &\,=\, x_0\partial_0+x_1\partial_1+x_2\partial_2+x_3\partial_3+3-d ,\\ 
P_1 &\,=\,  -x_0\partial_0^2 + x_1\partial_1^2 - \frac{d}{2}\partial_0 + \frac{4-d}{2}\partial_1,\\
P_2 &\,=\, -x_0\partial_0^2 + x_2\partial_2^2 - \frac{d}{2}\partial_0 + \frac{4-d}{2}\partial_2, \\
P_3 &\,=\, -x_0\partial_0^2 + x_3\partial_3^2 - \frac{d}{2}\partial_0 +\frac{4-d}{2}\partial_3, \\
P_s &\,=\, \partial_1\partial_2\partial_3 + \partial_0\partial_2\partial_3 + \partial_0\partial_1\partial_3 + \partial_0\partial_1\partial_2.
\end{split}\end{align}
\endgroup
The $D$-ideal $I=\langle P_E,P_1,P_2,P_3,P_s\rangle$ generated by them has holonomic rank $7$ and its singular locus is cut out by a single polynomial. 
For the degree reverse lexicographic order built on $\partial_0\succ \partial_1\succ \partial_2 \succ \partial_3$, the standard monomials are $\{ 1,\partial_1,\partial_2,\partial_3,\partial_1\partial_3,\partial_2\partial_3,\partial_3^2\}$, coinciding with the order ideal $\cO$ considered above. 

By comparing the Gröbner bases of the $R_4(d)$-ideals $J_{\cO}$ and $R_4(d)I$, we obtain that \mbox{$J_{\cO}=R_4(d)I$.} The Gröbner basis of $R_4(d)I$ with respect to degree reverse lex based on $\partial_0 \succ \partial_1 \succ \partial_2 \succ \partial_3$ consists of seven elements, namely exactly the border basis elements marked by the corners of $\mathcal{O}$, which are $\partial_0,\partial_1^2,\partial_1\partial_2,\partial_1\partial_3^2,\partial_2^2,\partial_2\partial_3^2,$ and $\partial_3^3$.

\subsection{Cosmological \texorpdfstring{$2$}{2}-site graph}
In cosmology, correlation functions are used to understand the distribution of matter and energy 
throughout the universe.
For the correlator of the $2$-site graph as a cosmological toy model, a matrix PDE in $\varepsilon$-factorized form was provided in~\cite[(1.9)]{DEcosmological}, see \Cref{sec:eps} for a definition. Therein, a basis of master integrals is constructed from the canonical forms of the underlying hyperplane arrangement, in the sense of positive geometry~\cite{PosGeom}.

For our undertaking to derive a border basis of an underlying ideal in the rational Weyl algebra $R_3(\varepsilon)=\CC(\varepsilon)(X_1,X_2,Y)\langle \partial_{X_1},\partial_{X_2},\partial_Y\rangle$, a basis consisting of monomials in the $\partial$'s is required. We change basis to the order ideal $\mathcal{O}_1=\{1,\partial_{Y},\partial_{Y}^2,\partial_{Y}^3\}$. The resulting gauge transformed connection matrices are not in \mbox{$\varepsilon$-factorized} form anymore, but they have several advantages: we can directly read an annihilating $D$-ideal of the correlator of the expected holonomic rank as well as a Picard--Fuchs operator.
The border of $\cO_1$ is $\partial \cO_1=\{ \partial_{X_1},\partial_{X_1}\partial_{Y},\partial_{X_1}\partial_{Y}^2,\partial_{X_1}\partial_{Y}^3,\partial_{X_2},\partial_{X_2}\partial_{Y},\partial_{X_2}\partial_{Y}^2,\partial_{X_2}\partial_{Y}^3,\partial_{Y}^4\}$ so that one can read an $\cO_1$-border basis $\{P_i^{(1)}|i=1,\ldots,9\}$  of an $R_3(\varepsilon)$-ideal $J_{\cO_1}$ of holonomic rank~$4$.  One of the $P_i^{(1)}$'s, namely the one marked by $\partial_{Y}^4$, $P_9^{(1)}\in \CC(\varepsilon)(X_1,X_2,Y)\langle \partial_Y\rangle$, is a Picard--Fuchs operator of the correlation function of order 4 in the $Y$-direction. The ideal is generated by the operators $P_1^{(1)},P_5^{(1)},P_9^{(1)}$ arising from the corners of~$\cO_1$, $\{\partial_{X_1},\partial_{X_2},\partial_Y^4\}$. These three operators are a Gröbner basis for the lexicographic order based on $\partial_{X_1} \succ \partial_{X_2} \succ \partial_{Y}$. 

Moreover, we checked computationally that $J_{\cO_1}$ coincides with the $R_3(\varepsilon)$-ideal generated by the generators of the $D$-ideal $I$ given in~\cite[(11)]{FPSW25}, which annihilates the cosmological correlator. In \cite{FPSW25}, a different $\CC(\varepsilon)(X_1,X_2,Y)$-basis of $R_3(\varepsilon)/R_3(\varepsilon)I$ was used for writing the connection matrices, namely the order ideal $\mathcal{O}_2=\{ 1,\partial_{X_1},\partial_{X_2},\partial_{X_1}\partial_{X_2} \} $. We point out that none of $\{1,\partial_{X_1},\partial_{X_1}^2,\partial_{X_1}^3\}$, $\{1,\partial_{X_2},\partial_{X_2}^2,\partial_{X_2}^3\}$, and $\{1,\partial_{X_1},\partial_{X_2},\partial_{Y}\}$ is a basis of $R_3(\varepsilon)/R_3(\varepsilon)I$. Hence, there are no Picard--Fuchs operators of order 4 in the directions of $X_1$ or~$X_2$.

The border of $\cO_2$ is
\begin{align}\label{ex:bordertwosite}
    \partial \cO_2 \,=\, \left\{ \partial_{X_1}^2,\partial_{X_1}^2\partial_{X_2},\partial_{X_2}^2,\partial_{X_1}\partial_{X_2}^2,\partial_{Y},\partial_{X_1}\partial_{Y},\partial_{X_2}\partial_{Y},\partial_{X_1}\partial_{X_2}\partial_{Y} \right\}
\end{align}
and its corners are $\{\partial_{Y},\partial_{X_1}^2,\partial_{X_2}^2\}$.
For each border element, from the resulting connection matrices, we read one border basis element $P_i^{(2)}$, $i=1,\ldots,8$, that is  marked by the respective border element $b_i$ in~\eqref{ex:bordertwosite}:
\begingroup
\allowdisplaybreaks
\begin{align*}
P_1^{(2)} \,&=\,\partial_{X_1}^2-\frac{\varepsilon(1 -2 \varepsilon)}{X_1^2-Y^2}
-\frac{(X_1 (3 \varepsilon 
-2)+Y \varepsilon )}{(X_1-Y) (X_1+Y)}\partial_{X_1}
-\frac{ \varepsilon  
(X_2+Y)}{(X_1-Y) (X_1+Y)}\partial_{X_2}\\
\,&\phantom{=\,}+\frac{ (X_2+Y)}{X_1-Y}\partial_{X_1}\partial_{X_2}
,\\
P_2^{(2)} \,&=\,\partial_{X_1}^2\partial_{X_2}-\frac{\varepsilon ^2(1-2 \varepsilon)}{(X_1+X_2) (X_1-Y) (X_1+Y)}-\frac{ \varepsilon  (2 \varepsilon -1)}{(X_1+X_2) (X_1-Y)}\partial_{X_1} \\
\,&\phantom{=\,}+\frac{\varepsilon  (X_1 (\varepsilon -1)-X_2 \varepsilon -2 Y \
\varepsilon +Y)}{(X_1+X_2) (X_1-Y) (X_1+Y)}\partial_{X_2}\\
\,&\phantom{=\,}+ \frac{-2 X_1^2 (\varepsilon -1)+X_1 (X_2+Y (2 \varepsilon -1))+Y (2 \
\varepsilon -1) (X_2+Y)}{(X_1+X_2) (X_1-Y) (X_1+Y)}\partial_{X_1}\partial_{X_2}, \\
P_3^{(2)} \,&=\,\partial_{X_2}^2 -\frac{\varepsilon  (X_1+Y)}{(X_2-Y) (X_2+Y)}\partial_{X_1}
-\frac{X_2 (3 \varepsilon -2)+Y \varepsilon}{(X_2-Y) \
(X_2+Y)}\partial_{X_2} 
+\frac{ (X_1+Y)}{X_2-Y}\partial_{X_1}\partial_{X_2},\\
P_4^{(2)} \,&=\, \partial_{X_1}\partial_{X_2}^2 
-\frac{\varepsilon ^2(1-2 \varepsilon)}{(X_1+X_2) (X_2-Y) (X_2+Y)}
-\frac{ \varepsilon  (2 \varepsilon -1)}{(X_1+X_2) (X_2-Y)}\partial_{X_2}
\\
\,&\phantom{=\,}-\frac{\varepsilon  (X_1 \varepsilon +X_2 (-\varepsilon )+X_2+Y \
(2 \varepsilon -1))}{(X_1+X_2) (X_2-Y) (X_2+Y)}\partial_{X_1}
\\
\,&\phantom{=\,}+\frac{X_1 (X_2+Y (2 \varepsilon -1))-2 X_2^2 (\varepsilon -1)+X_2 Y \
(2 \varepsilon -1)+Y^2 (2 \varepsilon -1)}{(X_1+X_2) (X_2-Y) (X_2+Y)}
\partial_{X_1}\partial_{X_2},
\\
P_5^{(2)} \,&=\, \partial_{Y}
-\frac{2 \varepsilon }{Y}
+\frac{ X_1}{Y}\partial_{X_1} 
+\frac{ X_2}{Y}\partial_{X_2}   ,\\
P_6^{(2)} \,&=\, \partial_{X_1}\partial_Y 
-\frac{ \varepsilon  (2 \varepsilon -1)X_1}{Y (X_1-Y) (X_1+Y)}
+\frac{X_1^2 (\varepsilon -1)+X_1 Y \varepsilon +Y^2 (2 \varepsilon \
-1)}{Y \left(X_1^2-Y^2\right)}
\partial_{X_1} \\
\,&\phantom{=\,}
-\frac{  \varepsilon  X_1(X_2+Y)}{Y (Y-X_1) (X_1+Y)}\partial_{X_2}
-\frac{ (X_1+X_2)}{X_1-Y} \partial_{X_1}\partial_{X_2}
,\\
P_7^{(2)} \,&=\,  \partial_{X_2}\partial_{Y} 
-\frac{ \varepsilon  X_2(2 \varepsilon -1)}{Y (X_2-Y) (X_2+Y)}
-\frac{X_2 \varepsilon  (X_1+Y)}{Y (Y-X_2) (X_2+Y)}\partial_{X_1}\\
\,&\phantom{=\,}
+\frac{X_2^2 (\varepsilon -1)+X_2 Y \varepsilon +Y^2 (2 
\varepsilon -1)}{Y \left(X_2^2-Y^2\right)}
\partial_{X_2}
-\frac{(X_1+X_2)}{X_2-Y} \partial_{X_1}\partial_{X_2}
,\\
P_8^{(2)} \,&=\, \partial_{X_1}\partial_{X_2}\partial_{Y}+\frac{\varepsilon ^2 (2 \varepsilon -1) \left(X_1 X_2-Y^2\right)}{Y \
(X_1-Y) (X_1+Y) (Y-X_2) (X_2+Y)}\\
\,&\phantom{=\,}+\frac{\varepsilon  \left(X_1 X_2 \varepsilon +X_2 Y (\varepsilon \
-1)+Y^2 (1-2 \varepsilon )\right)}{Y (Y-X_1) (Y-X_2) (X_2+Y)} \partial_{X_1}\\
\,&\phantom{=\,}+\frac{\varepsilon  \left(X_1 X_2 \varepsilon +X_1 Y (\varepsilon \
-1)+Y^2 (1-2 \varepsilon )\right)}{Y (Y-X_1) (X_1+Y) (Y-X_2)}\partial_{X_2} \\
\,&\phantom{=\,}+\frac{\left(Y^2-X_1 X_2\right) (X_1 (X_2+Y (2 \varepsilon -1))+Y (X_2 \
(2 \varepsilon -1)+Y))}{Y (Y-X_1) (X_1+Y) (Y-X_2) (X_2+Y)} \partial_{X_1}\partial_{X_2}. 
\end{align*}
\endgroup
These eight operators generate an $R_3(\varepsilon)$-ideal $J_{\cO_2}$ of holonomic rank~$4$. By \Cref{prop:invgauge}, we get that $R_3(\varepsilon)J_{\cO_1}=R_3(\varepsilon)J_{\cO_2}$. 
\begin{remark}
The $P_i^{(2)}$'s are an $\cO_2$-border basis of~$J_{\cO_2}$, but for orders such as the (reverse) lexicographic or degree (reverse) lexicographic order, the operators $P_1^{(2)}$~and~$P_3^{(2)}$ can not be part of a Gröbner basis such that the terms written first are the leading terms: since in both operators, the term $\partial_{X_1}\partial_{X_2}$ occurs with a non-zero coefficient, 
the two terms $\partial_{X_1}^2$ and $\partial_{X_2}^2$
can not both be larger than $\partial_{X_1}\partial_{X_2}$.
\end{remark}

\subsection{\texorpdfstring{$\varepsilon$}{eps.}-factorized form}\label{sec:eps}
The considered $R_n$-ideals in this section depend on an additional parameter~$\varepsilon$, such as a small parameter in the setup of dimensional regularization of Feynman integrals. The connection matrices $A_i \in \mathbb{M}_{m\times m}(\mathbb{C}(\varepsilon)(x))$ are said to be in {\em $\varepsilon$-factorized} form if they can be written as
\begin{align}
A_i \,=\, \varepsilon \cdot B_i \, , \quad i=1,\ldots,n,
\end{align}
with $B_i \in \mathbb{M}_{m\times m}(\mathbb{C}(x))$ independent of $\varepsilon$. 
The $\varepsilon$-factorized form of the connection matrices is particularly popular among Feynman integral practitioners for several reasons. Its main benefit is that it allows for the algorithmic computation of the (coefficients of) Feynman integrals in dimensional regularization in terms of iterated integrals, see~\cite[Section~6.3.3]{WeinzierlFeynmanIntegralsBook}. If such an $\varepsilon$-factorized form of a connection matrix exists\footnote{There are Feynman integrals for which one needs to go beyond the field of rational functions $\mathbb{C}(x)$ for the $\varepsilon$ to factor out, using algebraic or even transcendental functions of~$x$, see for instance \cite{Adams:2018yfj}. Geometrically, using a $\sqrt{x}$ would for instance require to pull back the considered $D$-module via a covering map that is ramified of degree~$2$. This goes beyond the scope of this work.}, software such as CANONICA~\cite{CanonicaPractice} implement heuristics to~find~it. 

If a connection matrix of an $R_n(\varepsilon)$-ideal $J$ is in $\varepsilon$-factorized form, this has further impacts on the their shape. In fact, one can show the following, see~\cite[Equation (5)]{henn2013multiloop}.

\begin{proposition}\label{prop:epsclosed}
Let $A$ be a connection matrix that is $\varepsilon$-factorized. Then  $\d A =0$.
\end{proposition}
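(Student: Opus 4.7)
The plan is to feed the $\varepsilon$-factorization $A_i = \varepsilon B_i$ (with $B_i \in \mathbb{M}_{m \times m}(\mathbb{C}(x))$ independent of $\varepsilon$) into the integrability condition recalled earlier in the paper, and then separate terms by their degree in $\varepsilon$. Recall that integrability of $A$ is equivalent, in matrix form, to
\begin{align*}
\partial_i \bullet A_j - \partial_j \bullet A_i \,=\, [A_i, A_j] \quad \text{for all } i,j,
\end{align*}
which is the coordinate form of $\d A = A \wedge A$.

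Substituting $A_i = \varepsilon B_i$ and using that $\varepsilon$ is a constant (so $\partial_i \bullet (\varepsilon B_j) = \varepsilon (\partial_i \bullet B_j)$), the condition becomes
\begin{align*}
\varepsilon \left(\partial_i \bullet B_j - \partial_j \bullet B_i\right) \,=\, \varepsilon^{2}\, [B_i, B_j] .
\end{align*}
The key step is to view this as an equality in $\mathbb{M}_{m \times m}(\mathbb{C}(\varepsilon)(x))$ and to compare coefficients of powers of $\varepsilon$: the left-hand side is homogeneous of degree $1$ in $\varepsilon$ while the right-hand side is of degree $2$, and the entries of $B_i$ lie in $\mathbb{C}(x)$ and are free of $\varepsilon$. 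Hence both sides must vanish separately, yielding
\begin{align*}
\partial_i \bullet B_j - \partial_j \bullet B_i \,=\, 0 \quad \text{and} \quad [B_i, B_j] \,=\, 0 \qquad \text{for all } i,j.
\end{align*}

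The first family of identities says precisely that $\d B = 0$ for the matrix of one-forms $B = B_1 \d x_1 + \cdots + B_n \d x_n$, and therefore $\d A = \varepsilon \, \d B = 0$, which is the claim. (As a bonus, the second family recovers the commutativity of the $B_i$ mentioned in the introduction for closed connections.) The argument is essentially a degree count; the only subtlety is ensuring that the separation by $\varepsilon$-power is legitimate, which holds because we work over $\mathbb{C}(\varepsilon)(x)$ and the $B_i$ live in the $\varepsilon$-free subring $\mathbb{C}(x)$. No genuine obstacle arises beyond this bookkeeping.
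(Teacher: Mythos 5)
Your argument is correct and is essentially the same as the paper's: substitute $A_i = \varepsilon B_i$ into the integrability condition \eqref{eq:integrabilityConditions} and compare powers of $\varepsilon$, forcing both the degree-one and degree-two parts to vanish separately. You spell out the degree count more explicitly than the paper's terse one-line argument, and you correctly note that the degree-one identity $\partial_i \bullet B_j - \partial_j \bullet B_i = 0$ is precisely what yields $\d A = \varepsilon\,\d B = 0$, with the commutativity of the $B_i$ coming out as a side effect.
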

Phrased in terms of the matrices $A_i$ in the Pfaffian system of~$J$, the condition $\d A=0$ translates as $[A_i,A_j]=0$ for any $i,j=1,\ldots,n$. 
\begin{proof}
Consider the integrability conditions of~\Cref{eq:integrabilityConditions}. If all the connection matrices are $\varepsilon$-factorized, then the left-hand side and right-hand side of the integrability conditions have different powers of $\varepsilon$, respectively. Thus, by equating the coefficients, one sees that all of them have to vanish.
\end{proof}
 
Being $\varepsilon$-factorized strongly depends on the chosen basis of~$R_n(\varepsilon)/J$. A gauge transformation w.r.t.\ $g\in \GL_m(\CC(\varepsilon ))$
keeps the $\varepsilon$-factorized form, while transformations w.r.t.\ $g\in \GL_n(\CC(\varepsilon)(x_1,\ldots,x_n))$ can easily break~it.
Since by \Cref{prop:epsclosed} being $\varepsilon$-factorized implies that 
the $A_i,A_j$'s are pairwise commuting, the set of $\varepsilon$-factorized connection matrices is a subset of the variety of commuting matrices. 

The variety $\mathbb{M}_{m\times m}^n(\CC)\cong \mathbb{A}_{\CC}^{m^2\cdot n}$ consists of $n$-tuples $(M_1,\ldots,M_n)$ of $m\times m$ matrices with entries in $\CC$, and we denote their entries by 
\begin{align*}
M_i \,=\, \left(m^{(i)}_{jk}\right)_{j,k=1,\ldots,m} .
\end{align*} 
The $m_{jk}^{(i)}$'s serve as the coordinates for~$\mathbb{M}_{m\times m}^n(\CC)$. The set of $n$-tuples of pairwise commuting matrices is a subvariety of $\mathbb{M}_{m\times m}^n$, cut out by quadratic equations in the entries of the matrices encoded by imposing that the matrices commute.

\begin{example}[$n=2,m=3$]\label{ex:comm23}
The following code in {\em Macaulay2} encodes the subvariety, {\tt C}, of $\mathbb{M}^2_{3\times 3}(\CC)$ consisting of commuting pairs of $3\times 3$ matrices $M=(m_{ij})_{i,j}$ and $N=(n_{ij})_{i,j}$.

\smallskip
\begin{small}
\begin{verbatim}
i1 : R = QQ[m_11..m_13,m_21..m_23,m_31..m_33,n_11..n_13,n_21..n_23,n_31..n_33];
i2 : M = matrix {{m_11..m_13},{m_21..m_23},{m_31..m_33}};
i3 : N = matrix {{n_11..n_13},{n_21..n_23},{n_31..n_33}};
i4 : commMN = M*N - N*M; L = flatten entries commMN; C = ideal L;
i5 : dim C, degree C
o5 = (13, 31)
\end{verbatim}
\end{small}
One obtains that the dimension of {\tt C} is $13$, its degree is $31$. 
\end{example}

\section{Conclusion}
Border bases generalize Gröbner bases for zero-dimensional ideals in polynomial rings. In this article, we transferred the notion of border bases to ideals of finite holonomic rank in the non-commutative rational Weyl algebra. We presented a characterization of border bases in terms of integrability conditions for the connection matrices and adapted the algorithms, such as the border division algorithm, to this non-commutative setup.

As an application of border bases, we presented how to systematically represent the \mbox{$R_n$-ideal} which underlies a system of linear PDEs that is given in matrix form. This is also of particular interest in current undertakings in the study of scattering amplitudes.

We also shed first light on the explicit description of parts of the moduli space $D$-ideals of fixed holonomic rank. More specifically, we addressed the classification of Frobenius ideals as well as $D$-ideals arising from PDEs with constant coefficients, building on the combinatorial description of the Hilbert scheme of points in affine space from the commutative setup. It will be worthwhile to investigate if this combinatorial approach can help to understand the geometry of the moduli space of integrable connections, which is highly intricate already in the one-dimensional case, see e.g.~\cite{Boalch2012,vdPS03}.

In future work, we plan to investigate the set of $n$-tuples of matrices 
\begin{align*}
 \left\{ (A_1,\ldots,A_n) \, | \, [A_i,A_j] \,=\, \partial_i\bullet A_j-\partial_j\bullet A_i \, \text{ for all }\,i \neq j  \right\}
 \, \subset \, \mathbb{M}_{m\times m}^n\left(\CC(x_1,\ldots,x_n)\right)
\end{align*} 
from an algebro-geometric perspective.
It is a subset of the algebraic variety of $n$-tuples of $m\times m$ matrices with entries in $\CC(x_1,\ldots,x_n)$. It is a diffiety, cut out by first-order algebraic differential equations, namely the integrability conditions. As such, this would no longer be part of the field of algebraic analysis, but rather of differential algebra~\cite{Ritt}. We believe that this algebro-geometric setup is promising to tackle a systematic study of $\varepsilon$-factorized connection matrices. In this case as well as the case of connection matrices with logarithmic differentials as entries---or, more generally whenever $\d A=0$---the $A_i$'s are pairwise commuting. One hence finds oneself within the variety of $n$-tuples of pairwise commuting matrices, which is inherently linked to Quot and Hilbert schemes.  

\bigskip \bigskip
\noindent {\bf Acknowledgments.}
We thank Felix Lotter for helping us with homological arguments to prove \Cref{prop:rankconstant} and Bernd Sturmfels for several inspiring and insightful discussions on Hilbert schemes of points and their relation to the theory of border bases in polynomial~rings. 

\bigskip
\noindent {\bf Funding statement.}
The research of CR and ALS is funded by the European Union (ERC, UNIVERSE PLUS, 101118787). Views and opinions expressed are, however, those of the author(s) only and do not necessarily reflect those of the European Union or the European Research Council Executive Agency. Neither the European Union nor the granting authority can be held responsible for them.

\phantomsection 
\addcontentsline{toc}{section}{References}
\begin{small}

\end{small}

\end{document}